\newtheorem{theorem}{Theorem}
\theoremstyle{plain}
\newtheorem{corollary}{Corollary}
\newtheorem{proposition}{Proposition}
\numberwithin{equation}{section}
\begin{document}
\title[]{Curvature Properties of Normal Complex Contact Metric Manifolds }
\author{Aysel TURGUT\ VANLI\ }
\address{Departments of Mathematics , Gazi University.}
\email{a.vanli@gazi.edu.tr}
\author{\.{I}nan \"{U}NAL\ }
\address{Departments of Computer Engineering, Tunceli Universty }
\email{inanunal@tunceli.edu.tr}
\subjclass[2010]{53C15, 53C25}
\keywords{normality, normal complex contact metric manifold, Ricci
curvature, curvature}

\begin{abstract}
In this paper, we study normal complex contact metric manifolds and we get
some general results on them. Moreover, we obtained the general expression
of the curvature tensor field for arbitrary vector fields. Furthermore, we
show that the necessary and sufficient conditions to be normal a complex
contact metric manifold. Also, we give some new equailities for Ricci
curvature of normal complex contact metric manifolds.
\end{abstract}

\maketitle

\renewcommand{\subjclassname}{{\rm 2010} Mathematics Subject
Classification}

\section{\protect\bigskip \textbf{Introduction }}

The study of complex contact manifolds began with Kobayashi \cite{KOb59} and
Boothby \cite{Boo61}, \cite{Boo62} in the late 1950's and early 1960's. In
1965, Wolf studied homogeneous complex contact manifolds \cite{Wolf65}.
Further research started again in the early 1980's by Ishihara and Konishi 
\cite{IK79}, \cite{IK80} and \cite{IK82}. They introduced a concept of
normality in \cite{IK80} but their normality condition seems too strong and
did not include natural examples like the complex Heisenberg group. In 1996,
Foreman investigate special metrics on complex contact manifolds by studying
critical condition of various Riemanian functionals on particular classes of
Riemanian metrics called the associated metrics \cite{FOR96}. He studied on
classification of three-dimensional complex homogeneous complex contact
manifolds, strict normal complex contact manifolds and the Boothby-Wang
Fibration on complex contact manifolds \cite{FOR99}, \cite{FOR2000} , \cite%
{FOR2000-2}. Then Korkmaz define a weaker version of normality in \cite%
{KB2000} and give the theorem which states the necessary and sufficient
conditions, in terms of the covariant derivatives of the structure tensors,
for a complex contact metric manifold to be normal. Korkmaz also defined the 
$\mathcal{GH}-$sectional curvature of normal complex contact metric
manifolds \cite{KB2000}. Blair and present author studied energy and
corrected energy of vertical distribution for normal complex contact metric
manifolds in \cite{VAN2004}, \cite{VAN2006}. Fetcu studied an adapted
connection on a strict complex contact manifolds and harmonic maps between
complex Sasakian manifolds in \cite{FER2006}, \cite{FERCU2006}.

Blair and Molina show that normal complex contact metric manifolds that are
Bochner flat must have constant holomorphic sectional curvature 4 and be K%
\"{a}hler \cite{BL2011}, and they showed that it is not possible for normal
complex contact metric manifolds to be conformally flat. In 2012, Blair and
Mihai prove that a complex $(\kappa ,%
\mu
)-$space with $\kappa <1$ is a locally homogeneous complex contact metric
manifold and they studied on locally symmetric condition of normal complex
contact metric manifolds \cite{BL2012}, \cite{BL2012-2}. For a general
discussion in complex contact geometry we refer to the reader to \cite%
{BL2010}.

In Section 2, we give the necessary definitions and some basic facts about
complex contact metric manifolds and also we give some curvature properties
of normal complex contact metric manifolds. In Section 3, we get some
general results on normal complex contact metric manifolds. Then in Section
4, we obtained the general expression of the curvature tensor field for
arbitrary vector fields. Furthermore, we give a new theorem for normality.
The theorem states the necessary and sufficient conditions, in terms of the
covariant derivatives of the structure tensors $G$ and $H$, for a complex
contact metric manifold to be normal. In Section 5, we discuss some results
on Ricci curvature of a normal complex contact metric manifold.

\section{\textbf{Preliminaries}}

\qquad A \textit{complex contact manifold} is a complex manifold of odd
complex dimension $2n+1$ together with an open covering $\{\mathcal{U\}}$ by
coordinate neighborhoods such that

\begin{enumerate}
\item On each $\mathcal{U}$, there is a holomorphic $1$-form $\theta $ with $%
\theta \wedge (d\theta )^{n}\neq 0$.

\item On $\mathcal{U}\cap \mathcal{U}^{\prime }\neq \emptyset $ there is a
non-vanishing holomorphic function $f$ such that $\theta ^{\prime }=f\theta $%
.
\end{enumerate}

The complex contact structure determines a non-integrable distribution $%
\mathcal{H}$ by the equation $\theta =0$. A complex contact structure is
given by a global 1-form if and only if its first Chern class vanishes \cite%
{Boo61}.

Let $M$ be a Hermitian manifold with almost complex structure $J$, Hermitian
metric $g$ and open covering by coordinate neighborhoods $\{\mathcal{U\}}$; $%
M$ is called a \textit{complex almost contact metric manifold} if it
satisfies the following two conditions:

\begin{enumerate}
\item In each $\mathcal{U}$ there exist $1$-forms $u$ and $v=u\circ J$, with
dual vector fields $U$ and $V=-JU$ and $(1,1)$ tensor fields $G$ and $H=GJ$
such that 
\begin{equation*}
H^{2}=G^{2}=-I+u\otimes U+v\otimes V
\end{equation*}%
\begin{equation*}
GJ=-JG,\quad GU=0,\quad g(X,GY)=-g(GX,Y).
\end{equation*}

\item On $\mathcal{U}\cap \mathcal{U}^{\prime }\neq \emptyset $ we have 
\begin{eqnarray*}
u^{\prime } &=&au-bv,\quad v^{\prime }=bu+av,\; \\
G^{\prime } &=&aG-bH,\quad H^{\prime }=bG+aH
\end{eqnarray*}%
where $a$ and $b$ are functions on $\mathcal{U}\cap \mathcal{U}^{\prime }$
with $a^{2}+b^{2}=1$.
\end{enumerate}

Since $u$ and $v$ are dual to the vector fields $U$ and $V$, we easily see
from the second condition that on $\mathcal{U}\cap \mathcal{U}^{\prime }$, $%
U^{\prime }=aU-bV$ and $V^{\prime }=bU+aV$. Also since $a^{2}+b^{2}=1$, $%
U^{\prime }\wedge V^{\prime }=U\wedge V$. Thus $U$ and $V$ determine a
global vertical distribution $\mathcal{V}$ by $\xi =U\wedge V$ which is
typically assumed to be integrable.

A result of this definition, on a complex almost contact metric manifold $M,$
the following identities hold;%
\begin{eqnarray*}
HG &=&-GH=J+u\otimes V-v\otimes U \\
JH &=&-HJ=G \\
g(HX,Y) &=&-g(X,HY) \\
GU &=&HU=HV=0 \\
uG &=&vG=uH=vH=0 \\
JV &=&U,~~g(U,V)=0
\end{eqnarray*}

A complex contact manifold admits a complex almost contact metric structure
for which the local contact form $\theta $ is $u-iv$ to within a
non-vanishing complex-valued function multiple and the local fields $G$ and $%
\ H$ are related to $du$ and $dv$ by 
\begin{eqnarray*}
du(X,Y) &=&g(X,GY)+(\sigma \wedge v)(X,Y),~~~ \\
dv(X,Y) &=&g(X,HY)-(\sigma \wedge u)(X,Y)
\end{eqnarray*}%
where $\sigma (X)=g(\nabla _{X}U,V)$, $\nabla $ being the Levi-Civita
connection of $g$ (\cite{IK82}, \cite{FOR96}).We refer to a complex contact
manifold with a complex almost contact metric structure satisfying these
conditions as a \textit{complex contact metric manifold}.

On a complex contact metric manifold \cite{KB2000} 
\begin{equation}
\nabla _{U}G=\sigma (U)H\text{ \ \ and \ \ }\nabla _{V}H=-\sigma (V)G.
\end{equation}%
Ishihara and Konishi \cite{IK79}, \cite{IK80} introduced a notion of
normality for complex contact structures. Their notion is the vanishing of
the two tensor fields $S$ and $T$ given by 
\begin{eqnarray}
S(X,Y) &=&[G,G](X,Y)+2g(X,GY)U-2g(X,HY)V \\
&&+2(v(Y)HX-v(X)HY)+\sigma (GY)HX  \notag \\
&&-\sigma (GX)HY+\sigma (X)GHY-\sigma (Y)GHX,  \notag
\end{eqnarray}%
\begin{eqnarray}
T(X,Y) &=&[H,H](X,Y)-2g(X,GY)U+2g(X,HY)V \\
&&+2(u(Y)GX-u(X)GY)+\sigma (HX)GY  \notag \\
&&-\sigma (HY)GX+\sigma (X)GHY-\sigma (Y)GHX.  \notag
\end{eqnarray}%
We recall that%
\begin{equation*}
\lbrack G,G](X,Y)=(\nabla _{GX}G)Y-(\nabla _{GY}G)X-G(\nabla
_{X}G)Y+G(\nabla _{Y}G)X
\end{equation*}%
is the Nijenhuis torsion of $G$.

Korkmaz \cite{KB2000} give somewhat weaker definition; a complex contact
metric manifold is normal if

$\qquad S(X,Y)=T(X,Y)=0$ \ for all $X,Y$ in $\mathcal{H},$ and $\ $

$\qquad S\left( X,U\right) =T\left( X,V\right) =0$ for all $X.$

It was also proved in \cite{KB2000} that;

\begin{proposition}
Let M be a complex contact metric manifold. Then $M$ is normal if and only
if\bigskip 
\begin{eqnarray}
g((\nabla _{X}G)Y,Z) &=&\sigma (X)g(HY,Z)+v(X)d\sigma (GZ,GY) \\
&&-2v(X)g(HGY,Z)-u(Y)g(X,Z)  \notag \\
&&-v(Y)g(JX,Z)+u(Z)g(X,Y)  \notag \\
&&+v(Z)g(JX,Y),  \notag
\end{eqnarray}%
\begin{eqnarray}
g((\nabla _{X}H)Y,Z) &=&-\sigma (X)g(GY,Z)-u(X)d\sigma (HZ,HY) \\
&&-2u(X)g(HGY,Z)+u(Y)g(JX,Z)  \notag \\
&&-v(Y)g(X,Z)-u(Z)g(JX,Y)  \notag \\
&&+v(Z)g(X,Y).  \notag
\end{eqnarray}
\end{proposition}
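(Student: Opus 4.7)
The plan is to prove both implications by exploiting the fact that the Levi-Civita connection $\nabla$ is torsion-free, so the Nijenhuis tensors in the definitions of $S$ and $T$ can be written as the four-term combinations already displayed after formula $(2.3)$. Once this substitution is made, the condition $S=T=0$ on the specified arguments becomes a system of linear relations among $(\nabla G)$, $(\nabla H)$, $\sigma$ and the algebraic tensors $u,v,G,H,J$, and we must show it is equivalent to $(2.4)$ and $(2.5)$.

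For the direction \emph{formulas $\Rightarrow$ normality}, I would substitute the expressions for $g((\nabla_X G)Y,Z)$ and $g((\nabla_X H)Y,Z)$ given in $(2.4)$ and $(2.5)$ into the four-term expansion of $[G,G]$ and $[H,H]$, pair the result into $S(X,Y)$ and $T(X,Y)$, and watch the terms collapse. The identities $HG=-GH=J+u\otimes V-v\otimes U$, $GJ=-JG$, $u\circ G=v\circ G=u\circ H=v\circ H=0$, together with the antisymmetry $g(X,GY)=-g(GX,Y)$, are the only tools needed; the calculation is mechanical but tedious, with the $\sigma$-terms cancelling against those already present in $(2.2)$ and $(2.3)$.

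For the converse, I would extract $(\nabla_X G)Y$ from the Nijenhuis combination by the standard trick used for almost-complex normality: evaluate $S(X,Y)=0$ and $S(GX,Y)=0$, apply $G$ to the latter, and add, so that $G^2=-I+u\otimes U+v\otimes V$ produces $2G(\nabla_{GY}G)X$ up to vertical corrections. Combined with the identity $G(\nabla_X G)Y=-(\nabla_X G)GY$ (which comes from differentiating $G^2=-I+u\otimes U+v\otimes V$ and using $\nabla U, \nabla V$), this pins down $(\nabla_X G)Y$ for $X,Y\in\mathcal H$. The vertical contributions (arguments equal to $U$ or $V$) are recovered from $S(X,U)=0$ together with $(2.1)$, and the $d\sigma$ term enters through the structure equation $du(X,Y)=g(X,GY)+(\sigma\wedge v)(X,Y)$ via the Cartan formula for $d\sigma$ applied to horizontal arguments. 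The same argument with $T$ in place of $S$ and $H$ in place of $G$ produces $(2.5)$.

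The main obstacle is bookkeeping rather than any single hard step: there are four distinct sources of terms on the right-hand sides of $(2.4)$ and $(2.5)$ (the $\sigma(X)H(Y,Z)$ piece, the $d\sigma(G\cdot,G\cdot)$ piece, the $u\otimes U,v\otimes V$ corrections from $G^2$, and the $JX$ terms from $HG$), and each must be correctly traced back to its origin in $S$ or $T$. Getting the signs and the coefficients right, particularly on the $d\sigma$ term, is where one must be most careful; everything else reduces to repeated application of the algebraic identities for a complex almost contact metric structure.
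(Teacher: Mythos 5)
The paper does not actually prove this proposition: it is quoted from Korkmaz [KB2000] (``It was also proved in \cite{KB2000} that\dots''), so there is no in-paper argument to compare yours against. The closest the paper comes is the converse half of its Theorem 5 in Section 4, where the Nijenhuis brackets in $S$ and $T$ are expanded into the four-term covariant-derivative combination and the substituted formulas are shown to cancel; that computation is exactly your first direction, which is sound (if tedious) as you describe it.

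The gaps are in your second direction, normality $\Rightarrow$ (2.4)--(2.5). First, the identity $G(\nabla_X G)Y=-(\nabla_X G)GY$ you invoke is not correct as stated: differentiating $G^{2}=-I+u\otimes U+v\otimes V$ produces the extra terms $((\nabla_X u)Y)U+u(Y)\nabla_X U+((\nabla_X v)Y)V+v(Y)\nabla_X V$, and the expressions $\nabla_X U=-GX+\sigma(X)V$, $\nabla_X V=-HX-\sigma(X)U$ are themselves consequences of normality (they are listed as (2.7) under that hypothesis), not of the complex contact metric structure alone; so you must first extract $\nabla U$ and $\nabla V$ from $S(X,U)=T(X,V)=0$ before running the $S(X,Y)$ versus $S(GX,Y)$ trick, and the vertical corrections must be carried along rather than discarded. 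Second, and more seriously, the weak normality hypothesis gives $S=T=0$ only on horizontal pairs together with $S(\cdot,U)=T(\cdot,V)=0$; it says nothing about $S(\cdot,V)$ or $T(\cdot,U)$. The terms $v(X)\,d\sigma(GZ,GY)$ and $-u(X)\,d\sigma(HZ,HY)$ in (2.4) and (2.5) are precisely the components $g((\nabla_V G)Y,Z)$ and $g((\nabla_U H)Y,Z)$, which therefore cannot be read off from any assumed vanishing of $S$ or $T$; they have to come from closing the structure equations, i.e.\ from $0=d(du)=d\bigl(g(\cdot,G\cdot)\bigr)+d\sigma\wedge v-\sigma\wedge dv$ (and its analogue for $dv$) evaluated on suitable triples. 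Your appeal to ``the Cartan formula for $d\sigma$ applied to horizontal arguments'' gestures at this but does not supply it, and without that step the hard direction of the equivalence does not close.
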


As a result of this proposition, on a normal complex contact metric
manifold, the covariant derivative of $J$ satisfies%
\begin{eqnarray}
g((\nabla _{X}J)Y,Z) &=&u(X)(d\sigma (Z,GY)-2g(HY,Z)) \\
&&+v(X)(d\sigma (Z,HY)+2g(GY,Z)).  \notag
\end{eqnarray}%
Also in \cite{KB2000} on a normal complex contact metric manifold we
have\bigskip 
\begin{eqnarray}
\nabla _{X}U &=&-GX+\sigma (X)V,~~ \\
\ \ \ \nabla _{X}V &=&-HX-\sigma (X)U,~~  \notag
\end{eqnarray}%
\begin{eqnarray}
\nabla _{U}U &=&\sigma (U)V,~~~\nabla _{U}V=-\sigma (U)U \\
\nabla _{V}U &=&\sigma (V)V,~~~~\nabla _{V}V=-\sigma (V)U,~~  \notag
\end{eqnarray}%
\begin{eqnarray}
d\sigma (GX,GY) &=&d\sigma (HX,HY) \\
&=&d\sigma (Y,X)-2u\wedge v(Y,X)d\sigma (U,V),~  \notag
\end{eqnarray}%
\begin{eqnarray}
d\sigma (U,X) &=&v(X)d\sigma (U,V),~~~~~~ \\
\ d\sigma (V,X) &=&-u(X)d\sigma (U,V).  \notag
\end{eqnarray}

Our convention for the curvature is 
\begin{equation*}
R(X,Y)Z=\bigtriangledown _{X}\bigtriangledown _{Y}Z-\bigtriangledown
_{Y}\bigtriangledown _{X}Z-\bigtriangledown _{\lbrack X,Y]}Z
\end{equation*}%
\begin{equation*}
R(X,Y,Z,W)=g(R(X,Y)Z,W).
\end{equation*}

For a general discussion of complex contact manifolds we refer to (\cite%
{BL2010},Chapter 12). We will also need the basic curvature properties of
normal complex contact metric manifolds which we list here \cite{KB2000}.
First of all for $U$ and $V=-JU$ \ vertical vector fields we have

\begin{equation}
R(U,V,V,U)=R(V,U,U,V)=-2d\sigma (U,V)~~
\end{equation}%
For $X$ and $Y$ horizontal vector fields we have the following 
\begin{equation}
R(X,U)U=X,~~~~~~R(X,V)V=X
\end{equation}%
\begin{equation}
R(X,Y)U=2(g(X,JY)+d\sigma (X,Y))V
\end{equation}%
\begin{equation}
R(X,Y)V=-2(g(X,JY)+d\sigma (X,Y))U
\end{equation}%
\begin{equation}
R(X,U)V=\sigma (U)GX+(\bigtriangledown _{U}H)X-JX
\end{equation}%
\begin{equation}
R(X,V)U=-\sigma (V)HX+(\nabla _{V}G)X+JX
\end{equation}

\begin{equation}
R(X,U)Y=-g(X,Y)U-g(JX,Y)V+d\sigma (Y,X)V,
\end{equation}%
\begin{equation}
R(X,V)Y=-g(X,Y)V+g(JX,Y)U-d\sigma (Y,X)U~\ \ 
\end{equation}%
\begin{equation}
R(U,V)X=JX\text{ .}
\end{equation}%
In \cite{KB2000}, we have \ \ \ \ \ 
\begin{eqnarray}
g(R(GX,GY)GZ,GW) &=&g(R(X,Y)Z,W) \\
&&-2g(JZ,W)d\sigma (X,Y)  \notag \\
&&+2g(HX,Y)d\sigma (GZ,W)  \notag \\
&&+2g(JX,Y)d\sigma (Z,W)  \notag \\
&&-2g(HZ,W)d\sigma (GX,Y),  \notag
\end{eqnarray}

\begin{eqnarray}
g(R(HX,HY)HZ,HW) &=&g(R(X,Y)Z,W) \\
&&-2g(JZ,W)d\sigma (X,Y)  \notag \\
&&-2g(GX,Y)d\sigma (HZ,W)  \notag \\
&&+2g(JX,Y)d\sigma (Z,W)  \notag \\
&&+2g(GZ,W)d\sigma (HX,Y).  \notag
\end{eqnarray}

On the other hand , in \cite{FOR96} we get 
\begin{equation}
d\sigma (X,Y)=2g(JX,Y)+g((\bigtriangledown _{U}J)GX,Y).
\end{equation}

\section{\textbf{Some General Results On Normal Complex Contact Metric
Manifolds}}

\begin{theorem}
Let $M$ be a normal complex contact metric manifold\ and $X,Y$ arbitrary
vector fields on $M$. Then we have 
\begin{equation*}
\left( \bigtriangledown _{X}u\right) Y=g(X,GY)+\sigma \left( X\right)
v\left( Y\right)
\end{equation*}%
\begin{equation*}
\left( \bigtriangledown _{X}v\right) Y=g(X,GY)-\sigma \left( X\right)
u\left( Y\right)
\end{equation*}
\end{theorem}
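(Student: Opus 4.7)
The plan is to use the standard relation between the covariant derivative of a dual $1$-form and the covariant derivative of its associated vector field. Since $u$ and $v$ are the $g$-duals of $U$ and $V$ respectively, we have $u(Y)=g(U,Y)$ and $v(Y)=g(V,Y)$, and metric compatibility of $\nabla$ immediately gives
\begin{equation*}
(\nabla_{X}u)Y = g(\nabla_{X}U,Y), \qquad (\nabla_{X}v)Y = g(\nabla_{X}V,Y).
\end{equation*}

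At this point both identities reduce to substituting the expressions for $\nabla_{X}U$ and $\nabla_{X}V$ on a normal complex contact metric manifold, which are already available from formula $(2.7)$ of the preliminaries, namely
\begin{equation*}
\nabla_{X}U = -GX + \sigma(X)V, \qquad \nabla_{X}V = -HX - \sigma(X)U.
\end{equation*}
After substitution one applies the skew-symmetries $g(GX,Y)=-g(X,GY)$ and $g(HX,Y)=-g(X,HY)$ (listed among the basic identities of a complex almost contact metric structure) to convert $-g(GX,Y)$ into $g(X,GY)$, and similarly for $H$. Finally, $g(V,Y)=v(Y)$ and $g(U,Y)=u(Y)$ produce the asserted forms.

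The argument is essentially a two-line mechanical computation, so there is no substantive obstacle; the only small point worth flagging is that a literal application of this recipe to $(\nabla_{X}v)Y$ yields $g(X,HY)-\sigma(X)u(Y)$, so the symbol $G$ that appears in the second displayed formula of the theorem should be read as $H$ (a minor typographical slip). With that understanding, both identities follow instantly from $(2.7)$ and the skew-symmetry of $G$ and $H$.
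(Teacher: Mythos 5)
Your proof is correct and follows essentially the same route as the paper: compute $(\nabla_X u)Y=g(\nabla_X U,Y)$ and $(\nabla_X v)Y=g(\nabla_X V,Y)$ by metric compatibility, then substitute $(2.7)$ and use skew-symmetry of $G$ and $H$. You are also right that the second identity as printed should read $g(X,HY)-\sigma(X)u(Y)$; the paper's own final display $g(-HX-\sigma(X)U,Y)$ confirms this is a typographical slip in the statement.
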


\begin{proof}
For $X$ vector field, $1-$forms $u$ and $v$ we get 
\begin{equation*}
\left( \bigtriangledown _{X}u\right) Y=\bigtriangledown _{X}g(U,Y)-g\left(
U,\bigtriangledown _{X}Y\right) =g\left( \bigtriangledown _{X}U,Y\right)
\end{equation*}%
and 
\begin{equation*}
\left( \bigtriangledown _{X}v\right) Y=\bigtriangledown _{X}g(V,Y)-g\left(
V,\bigtriangledown _{X}Y\right) =g\left( \bigtriangledown _{X}V,Y\right)
\end{equation*}%
by using (2.7)%
\begin{equation*}
\left( \bigtriangledown _{X}u\right) Y=g(-GX+\sigma (X)V,Y)\text{ and }%
\left( \bigtriangledown _{X}v\right) Y=g\left( -HX-\sigma (X)U,Y\right)
\end{equation*}%
so the proof is completed.
\end{proof}

\begin{corollary}
Let M be a normal complex contact metric manifold. Then we have 
\begin{eqnarray}
~g\left( \left( \bigtriangledown _{U}G\right) X_{0},V\right) &=&0~~,~g\left(
\left( \bigtriangledown _{U}H\right) X_{0},V\right) =0 \\
~g\left( \left( \bigtriangledown _{U}G\right) X_{0},U\right) &=&0~\
~,~g\left( \left( \bigtriangledown _{U}H\right) X_{0},U\right) =0  \notag \\
g\left( \left( \bigtriangledown _{V}G\right) X_{0},U\right) &=&0~,~g\left(
\left( \bigtriangledown _{V}H\right) X_{0},U\right) =~0~  \notag \\
~g\left( \left( \bigtriangledown _{V}G\right) X_{0},V\right) &=&0~~,~g\left(
\left( \bigtriangledown _{V}H\right) X_{0},V\right) =~0~~  \notag
\end{eqnarray}%
\begin{eqnarray}
~g\left( \left( \bigtriangledown _{U}J\right) X_{0},V\right) &=&0~~,~g\left(
\left( \bigtriangledown _{U}J\right) X_{0},U\right) =0~\  \\
g\left( \left( \bigtriangledown _{V}J\right) X_{0},U\right) &=&0~,~~g\left(
\left( \bigtriangledown _{V}J\right) X_{0},V\right) =0  \notag
\end{eqnarray}%
for all $X_{0}\in \mathcal{H}$ .
\end{corollary}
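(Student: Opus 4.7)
The idea is that each of the twelve identities has the form $g((\nabla_X T)X_0, Z)$ with $X, Z \in \{U,V\}$, $T \in \{G,H,J\}$ and $X_0 \in \mathcal{H}$, so I would treat them uniformly via the Leibniz expansion
\begin{equation*}
g((\nabla_X T)X_0, Z) = g(\nabla_X(TX_0), Z) - g(T(\nabla_X X_0), Z).
\end{equation*}
The plan is to show that each of the two terms on the right vanishes, using only (2.7), (2.8), the algebraic identities $GU=GV=HU=HV=0$, $JU=-V$, $JV=U$, the skew-symmetry of $G,H,J$ with respect to $g$, and the horizontality of $X_0$.

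For the first term I would apply metric compatibility: $g(\nabla_X(TX_0), Z) = X\, g(TX_0, Z) - g(TX_0, \nabla_X Z)$. Since $TX_0$ is horizontal for $T\in\{G,H,J\}$ (for $G,H$ this is immediate from skew-symmetry together with $TU=TV=0$; for $J$ one uses $JU=-V$, $JV=U$ and skew-symmetry), the function $g(TX_0,Z)$ vanishes identically and its $X$-derivative is zero. The remaining piece $g(TX_0,\nabla_X Z)$ is handled by (2.8), which expresses $\nabla_X Z$ (for $X,Z\in\{U,V\}$) as a scalar multiple of $U$ or $V$; pairing such a vertical vector with the horizontal $TX_0$ again yields zero.

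For the second term I would use skew-symmetry to move $T$ onto the vertical argument: $g(T(\nabla_X X_0), Z) = -g(\nabla_X X_0, TZ)$. For $T=G$ or $T=H$ this already vanishes, since $GU=GV=HU=HV=0$. For $T=J$ this reduces to $\mp g(\nabla_X X_0, V)$ or $\mp g(\nabla_X X_0, U)$, which I handle by one more Leibniz step: differentiating the identities $g(X_0,U)=0$ and $g(X_0,V)=0$ along $X\in\{U,V\}$ and using (2.8) gives $g(\nabla_X X_0, U)=-g(X_0,\nabla_X U)=0$ and similarly with $V$, because $\nabla_X U$ and $\nabla_X V$ are vertical while $X_0$ is horizontal.

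I do not expect any real obstacle here: the calculation is a systematic bookkeeping exercise in which every term is killed either by horizontality of $X_0$ or by $GU=GV=HU=HV=0$, with (2.8) controlling the covariant derivatives of $U$ and $V$ in the vertical directions. The only mildly nontrivial point is the $J$-case, where the identity $JU=-V$, $JV=U$ prevents $TZ$ from vanishing outright and forces the extra Leibniz step described above; beyond that, all twelve equations follow by the same template.
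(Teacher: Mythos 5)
Your argument is correct, but it follows a different route from the paper. The paper's proof is a one-line substitution: it takes the normality characterization formulas (2.4), (2.5) for $g((\nabla_X G)Y,Z)$ and $g((\nabla_X H)Y,Z)$, together with (2.6) for $J$, sets $X,Z\in\{U,V\}$ and $Y=X_0\in\mathcal{H}$, and observes that every term on the right-hand side dies because $u(X_0)=v(X_0)=0$, $u(V)=v(U)=0$, and $g(HX_0,U)=g(HX_0,V)=0$, etc. You instead bypass (2.4)--(2.6) entirely: you expand $g((\nabla_X T)X_0,Z)$ by the Leibniz rule and kill each piece using only the algebraic identities ($GU=GV=HU=HV=0$, $JU=-V$, $JV=U$, skew-symmetry of $G,H,J$) together with (2.8), which guarantees that $\nabla_U U,\nabla_U V,\nabla_V U,\nabla_V V$ are vertical. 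Both proofs are valid and both ultimately rest on normality (the paper through the full covariant-derivative formulas, you through (2.8), which is itself derived from them), but your version is more economical in what it invokes: it isolates exactly which consequence of normality is needed, namely that the vertical plane field is preserved by $\nabla$ along $U$ and $V$, and it makes transparent why the vanishing is forced by horizontality alone. The paper's version is shorter to write down given that (2.4)--(2.6) are already on the table. The one point worth stating explicitly in a write-up of your approach is the observation that $GX_0$, $HX_0$, and $JX_0$ remain horizontal for horizontal $X_0$ (which you do justify via skew-symmetry and $JU=-V$, $JV=U$); with that in place there is no gap.
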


\begin{proof}
For $X_{0}\in \mathcal{H}$ and from (2.4), (2.5), (2.6) we have (3.1) and
(3.2).
\end{proof}

\begin{theorem}
Let M be a normal complex contact metric manifold. Then, we have%
\begin{equation}
\left( \nabla _{U}G\right) X_{0}=\left( \nabla _{U}G\right) X\,~~,\left(
\nabla _{V}G\right) X_{0}=\left( \nabla _{V}G\right) X,\,
\end{equation}%
\begin{equation}
~\left( \nabla _{U}H\right) X_{0}=\left( \nabla _{U}H\right) X~,\left(
\nabla _{V}H\right) X_{0}=\left( \nabla _{V}H\right) X,
\end{equation}%
\begin{equation}
\left( \nabla _{U}J\right) X_{0}=\left( \nabla _{U}J\right) X\,~~,\left(
\nabla _{V}J\right) X_{0}=\left( \nabla _{V}J\right) X,
\end{equation}%
\begin{equation}
g\left( \left( \bigtriangledown _{U}G\right) X,Y\right) =g\left( \left(
\bigtriangledown _{U}G\right) X_{0},Y_{0}\right) =\sigma (U)g(HX_{0},Y_{0}),
\end{equation}%
\begin{eqnarray}
g\left( \left( \bigtriangledown _{V}G\right) X,Y\right) &=&g\left( \left(
\bigtriangledown _{V}G\right) X_{0},Y_{0}\right) \\
&=&\sigma (V)g(HX_{0},Y_{0})+d\sigma (Y_{0},X_{0})-2g(JX_{0},Y_{0}),  \notag
\end{eqnarray}%
\begin{equation}
g\left( \left( \bigtriangledown _{V}H\right) X,Y\right) =g\left( \left(
\bigtriangledown _{V}H\right) X_{0},Y_{0}\right) =-\sigma (V)g(GX_{0},Y_{0})
\end{equation}%
\begin{eqnarray}
g\left( \left( \bigtriangledown _{U}H\right) X,Y\right) &=&g\left( \left(
\bigtriangledown _{U}H\right) X_{0},Y_{0}\right) \\
&=&-\sigma (U)g(GX_{0},Y_{0})  \notag \\
&&-d\sigma (Y_{0},X_{0})+2g(JX_{0},Y_{0})  \notag
\end{eqnarray}%
\begin{eqnarray}
g(\left( \nabla _{U}J\right) GX,Y) &=&g(\left( \nabla _{U}J\right)
GX_{0},Y_{0}) \\
&=&-d\sigma (Y_{0},X_{0})-2g(JX_{0},Y_{0})  \notag
\end{eqnarray}%
\begin{eqnarray}
g(\left( \nabla _{V}J\right) GX,Y) &=&g(\left( \nabla _{V}J\right)
GX_{0},Y_{0}) \\
&=&d\sigma (Y_{0},GX_{0})-2g(HX_{0},Y_{0})  \notag
\end{eqnarray}%
where $X=X_{0}+u(X)U+v(X)V$ , $Y=Y_{0}+u(Y)U+v(Y)V$ and $X_{0}$ ,$Y_{0}$ $%
\in \mathcal{H}.$

\begin{proof}
For $X=X_{0}+u(X)U+v(X)V$ we have 
\begin{eqnarray*}
\left( \nabla _{U}G\right) X &=&\nabla _{U}GX_{0}-G\left( \nabla
_{U}X_{0}+\nabla _{U}u(X\right) U+\nabla _{U}v(X)V) \\
&=&\left( \nabla _{U}G\right) X_{0}-G\left( U\left[ u(X)\right] U+u(X)\nabla
_{U}U\right. \\
&&\left. +U\left[ v(X)\right] V+v(X)\nabla _{U}V\right)
\end{eqnarray*}%
and%
\begin{eqnarray*}
\left( \nabla _{U}H\right) X &=&\nabla _{U}HX-H\left( \nabla _{U}X\right) \\
&=&\nabla _{U}HX_{0}-H\left( \nabla _{U}X_{0}+\nabla _{U}u(X\right) U+\nabla
_{U}v(X)V) \\
&=&\left( \nabla _{U}H\right) X_{0}-H\left( U\left[ u(X)\right] U+u(X)\nabla
_{U}U\right. \\
&&\left. +U\left[ v(X)\right] V+v(X)\nabla _{U}V\right)
\end{eqnarray*}%
since $GU=GV=HU=HV=0$ and from (2.8)\ we have $\left( \nabla _{U}G\right)
X_{0}=\left( \nabla _{U}G\right) X~~~$and $\ \left( \nabla _{U}H\right)
X_{0}=\left( \nabla _{U}H\right) X.$ By similar way we can proof the others,
so we get (3.3) and (3.4) . On the other hand 
\begin{eqnarray*}
\left( \nabla _{U}J\right) X &=&\nabla _{U}\left( JX_{0}+u(X)U+v(X)V\right)
-J\left( \nabla _{U}X_{0}\right. \\
&&+\nabla _{U}u(X)U+\nabla _{U}v(X)V) \\
&=&\left( \nabla _{U}J\right) X_{0}+U\left[ u(X)\right] U+u(X)\nabla _{U}U \\
&&\left. +U\left[ v(X)\right] V+v(X)\nabla _{U}V\right) -J\left( U\left[ u(X)%
\right] U\right. \\
&&+u(X)\nabla _{U}U\left. +U\left[ v(X)\right] V+v(X)\nabla _{U}V\right)
\end{eqnarray*}%
from (2.8)\ and since $V=-JU,U=JV$ we obtain $\left( \nabla _{U}J\right)
X=\left( \nabla _{U}J\right) X_{0}.$ Similarly we can proof $\left( \nabla
_{V}J\right) X=\left( \nabla _{V}J\right) X_{0}$, so we get (3.5). In
addition for $Y=Y_{0}+u(Y)U+v(Y)V$ from (2.4), (3.1) and (3.3) we have 
\begin{eqnarray*}
g((\nabla _{U}G)X,Y) &=&g((\nabla _{U}G)X_{0},Y_{0})=\sigma
(U)g(HX_{0},Y_{0}) \\
&&+v(U)d\sigma (GY_{0},GX_{0})-2v(U)g(HGX_{0},Y_{0}) \\
&&-u(X_{0})g(U,Y_{0})-v(X_{0})g(JU,Y_{0})+u(Y_{0})g(U,X_{0}) \\
&&+v(Y_{0})g(JU,X_{0}) \\
&=&\sigma (U)g(HX_{0},Y_{0})
\end{eqnarray*}%
and 
\begin{eqnarray*}
g((\nabla _{V}G)X,Y) &=&g((\nabla _{V}G)X_{0},Y_{0})=\sigma
(V)g(HX_{0},Y_{0}) \\
&&+v(V)d\sigma (GY_{0},GX_{0})-2v(V)g(HGX_{0},Y_{0}) \\
&&-u(X_{0})g(V,Y_{0})-v(X_{0})g(JV,Y_{0})+u(Y_{0})g(V,X_{0}) \\
&&+v(Y_{0})g(JV,X_{0}) \\
&=&\sigma (V)g(HX_{0},Y_{0})+d\sigma (Y_{0},X_{0})-g(JX_{0},Y_{0})
\end{eqnarray*}%
so we get (3.6) and (3.7). By the same way from (2.5), (3.1) and (3.4) we
have 
\begin{eqnarray*}
g((\nabla _{V}H)X,Y) &=&g((\nabla _{V}H)X_{0},Y_{0})=-\sigma
(V)g(GX_{0},Y_{0})-u(V)d\sigma (HY_{0},HX_{0}) \\
&&-2u(V)g(HGX_{0},Y_{0})+u(X_{0})g(JV,Y_{0}) \\
&&-v(X_{0})g(V,Y_{0})-u(Y_{0})g(JV,X_{0}) \\
&&+v(Y_{0})g(V,X_{0}) \\
&=&-\sigma (V)g(GX_{0},Y_{0})
\end{eqnarray*}%
and 
\begin{eqnarray*}
g((\nabla _{U}H)X,Y) &=&g((\nabla _{U}H)X_{0},Y_{0})=-\sigma
(U)g(GX_{0},Y_{0})-u(U)d\sigma (HY_{0},HX_{0}) \\
&&-2u(U)g(HGX_{0},Y_{0})-u(X_{0})g(JU,Y_{0}) \\
&&-v(X_{0})g(U,Y_{0})-u(Y_{0})g(JU,X_{0}) \\
&&+v(Y_{0})g(U,X_{0}) \\
&=&-\sigma (U)g(GX_{0},Y_{0})-d\sigma (Y_{0},X_{0})+2g(JX_{0},Y_{0})
\end{eqnarray*}%
so we get (3.8) and (3.9) . Finally \ from (2.6) , (3.2) and (3.5) we get
(3.10) and (3.11).
\end{proof}
\end{theorem}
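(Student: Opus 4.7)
The plan is to split the theorem into two tasks. First, establish the reduction identities (3.3)--(3.5), which state that $(\nabla_U G), (\nabla_V G), (\nabla_U H), (\nabla_V H), (\nabla_U J), (\nabla_V J)$ depend only on the horizontal part of the argument. Second, use these reductions together with Proposition~1 to read off the explicit formulas (3.6)--(3.11).

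For (3.3) and (3.4), I decompose $X = X_0 + u(X)U + v(X)V$ and expand $(\nabla_U G)X = \nabla_U(GX) - G(\nabla_U X)$. Since $GU = GV = 0$, the first term collapses to $\nabla_U(GX_0)$. In the second term, $\nabla_U X$ produces extra contributions involving $U$, $V$, $\nabla_U U = \sigma(U)V$ and $\nabla_U V = -\sigma(U)U$ (by (2.8)); all of these lie in the vertical distribution $\mathcal{V}$ and are therefore killed by $G$. The identical argument handles $\nabla_V G$, $\nabla_U H$, $\nabla_V H$. For (3.5), the catch is that $JU = -V$ and $JV = U$ are not zero, so the extra terms do not vanish individually; however, a direct expansion of $\nabla_U(JX) - J(\nabla_U X)$ shows that the surviving contributions on the two sides cancel pairwise, again using (2.8).

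For (3.6)--(3.9), I substitute $X \to U$ or $X \to V$, $Y \to X_0$, $Z \to Y_0$ into the Proposition~1 formulas (2.4), (2.5). Horizontality of $X_0, Y_0$ annihilates every factor of the form $u(\cdot), v(\cdot), g(U,\cdot), g(V,\cdot)$. When the differentiating vector is $U$, the factor $u(U) = 1$ (and $v(U) = 0$) revives the $d\sigma(HY_0, HX_0)$ and $G H X_0$ terms in (2.5), while for $V$ the factor $v(V) = 1$ revives the corresponding $d\sigma(GY_0, GX_0)$ and $H G X_0$ terms in (2.4). Then I use $HGX_0 = JX_0$ on $\mathcal{H}$ (from the identity $HG = J + u \otimes V - v \otimes U$) together with (2.9), which reduces $d\sigma(GY_0, GX_0)$ to $d\sigma(X_0, Y_0)$. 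This produces (3.6)--(3.9). The last two identities (3.10), (3.11) then follow the same way from (2.6), after substituting $GX$ for $X$ and using $G U = G V = 0$.

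The main obstacle I expect is sign bookkeeping: tracking the antisymmetry of $d\sigma$ between $d\sigma(X_0, Y_0)$ and $d\sigma(Y_0, X_0)$, and keeping straight which of $U, V$ contributes through $u$ versus $v$, and how (2.9) trades $G$ for $H$. Conceptually, once the reductions (3.3)--(3.5) are in hand, every one of (3.6)--(3.11) is a routine specialization of Proposition~1 to horizontal pairs, so there is no deeper structural argument beyond the observation that $\mathcal{V}$ is closed under $\nabla_U$ and $\nabla_V$.
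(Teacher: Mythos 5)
Your proposal follows essentially the same route as the paper: the same decomposition $X=X_{0}+u(X)U+v(X)V$ with $GU=GV=HU=HV=0$ and (2.8) to kill (or, for $J$, pairwise cancel) the vertical contributions, and then the same specialization of the Proposition~1 formulas (2.4)--(2.6) to horizontal arguments, using (2.9) and $HG=J$ on $\mathcal{H}$ to simplify. The plan is correct and the sign bookkeeping you flag is indeed the only delicate point.
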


\section{\protect\bigskip \textbf{Some Results on Curvature of Normal
Complex Contact Metric Manifolds}}

\begin{theorem}
Let M be a normal complex contact metric manifold. For $X,Y,Z,W$ horizontal
vector fields we have 
\begin{equation}
g(R(GX,GY)GZ,GW)=g(R(HX,HY)HZ,HW)=g(R(X,Y)Z,W).
\end{equation}

\begin{proof}
By using (2.22) in (2.20)\ we have%
\begin{eqnarray*}
&&-2g(JZ,W)d\sigma (X,Y)+2g(HX,Y)d\sigma (GZ,W) \\
&&+2g(JX,Y)d\sigma (Z,W)-2g(HZ,W)d\sigma (GX,Y) \\
&=&-2g(JZ,W)\left( 2g(JX,Y)+g((\bigtriangledown _{U}J)GX,Y)\right) \\
&&+2g(HX,Y)\left( 2g(JGZ,W)+g((\bigtriangledown _{U}J)G^{2}Z,W)\right) \\
&&+2g(JX,Y)\left( 2g(JZ,W)+g((\bigtriangledown _{U}J)GZ,W)\right) \\
&&-2g(HZ,W)(2g(JgX,Y)+g((\bigtriangledown _{U}J)G^{2}X,Y))
\end{eqnarray*}%
Since $JG=-H$ and for $X$ horizontal vector field $G^{2}X=-X$ \ we have%
\begin{eqnarray*}
&&-2g(JZ,W)d\sigma (X,Y)+2g(HX,Y)d\sigma (GZ,W) \\
&&+2g(JX,Y)d\sigma (Z,W)-2g(HZ,W)d\sigma (GX,Y)) \\
&=&-2g(JZ,W)g((\bigtriangledown _{U}J)GX,Y)-2g(HX,Y)g((\bigtriangledown
_{U}J)GZ,W) \\
&&+2g(JX,Y)g((\bigtriangledown _{U}J)GZ,W)+2g(HZ,W)g((\bigtriangledown
_{U}J)GX,Y))
\end{eqnarray*}%
From (2.6) and \ by simply computation we get%
\begin{eqnarray*}
&&-2g(JZ,W)d\sigma (X,Y)+2g(HX,Y)d\sigma (GZ,W) \\
&&+2g(JX,Y)d\sigma (Z,W)-2g(HZ,W)d\sigma (GX,Y)=0
\end{eqnarray*}%
Considering (2.20 ) from last equation we obtained that 
\begin{equation*}
g(R(GX,GY)GZ,GW)=g(R(X,Y)Z,W)
\end{equation*}%
By same way we can easily show that $g(R(HX,HY)HZ,HW)=g(R(X,Y)Z,W).$ So, the
proof is completed.
\end{proof}
\end{theorem}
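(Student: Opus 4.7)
The plan is to show that the ``error'' term on the right-hand side of (2.20) vanishes whenever $X, Y, Z, W$ are all horizontal; the $H$-version then follows by the parallel argument applied to (2.21). The first step is to substitute (2.22) into each of the four factors $d\sigma(X,Y)$, $d\sigma(GZ,W)$, $d\sigma(Z,W)$, $d\sigma(GX,Y)$ appearing in the discrepancy in (2.20). Using $JG = -H$ together with $G^{2}X = -X$ for horizontal $X$ (which holds because $G^{2} = -I + u\otimes U + v\otimes V$ and both $u,v$ annihilate $\mathcal{H}$), the expansion separates naturally into two groups: a ``bilinear'' block consisting of pure products like $g(JZ,W)g(JX,Y)$ and $g(HX,Y)g(HZ,W)$, and a ``residual'' block involving factors of $g((\nabla_{U}J)\cdot,\cdot)$.

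The bilinear block cancels by direct pairing: the two $g(JZ,W)g(JX,Y)$ contributions occur with opposite signs, and likewise for the two $g(HX,Y)g(HZ,W)$ contributions. For the residual block, I would invoke Proposition~1 in the form (2.6): specializing the base direction to $U$ (so $u(U)=1$, $v(U)=0$) collapses $g((\nabla_{U}J)M,N)$ to $d\sigma(N,GM) - 2g(HM,N)$. Plugging this expression back, applying $HG = J$ on horizontal vectors, and using the antisymmetry of $d\sigma$, the four residual summands should combine to zero, which establishes the first equality. The second equality then follows by running the same argument on (2.21), now using $JH = G$ and $H^{2}X = -X$ on $\mathcal{H}$.

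I expect the main obstacle to be twofold. First, the sign bookkeeping in the factors $d\sigma(GZ,W)$ and $d\sigma(GX,Y)$ is delicate: the extra $G$ brought in by (2.22) must be combined with $G^{2}=-I$ on $\mathcal{H}$ in exactly the right way for the bilinear pairs to cancel. Second, because (2.6) brings $d\sigma$ back into play after expanding $(\nabla_{U}J)$, one must be careful that the re-emergent $d\sigma$-terms genuinely cancel amongst themselves rather than cycling back to the original error expression; verifying that this final closure is substantive rather than tautological is where the real work of the argument lies.
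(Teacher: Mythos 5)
Your proposal reproduces the paper's argument step for step: substitute (2.22) into the four $d\sigma$ factors of the discrepancy in (2.20), use $JG=-H$ and $G^{2}=-I$ on $\mathcal{H}$ to cancel the bilinear block by pairing, and then appeal to (2.6) to kill the residual block involving $g((\nabla_{U}J)\,\cdot\,,\cdot)$. So in that sense you have found the intended route. However, the worry you voice in your last sentence is not a technicality to be checked --- it is exactly where this route breaks down, in your write-up and in the paper alike. Specializing (2.6) to the direction $U$ gives $g((\nabla_{U}J)M,N)=d\sigma(N,GM)-2g(HM,N)$; replacing $M$ by $GM$ and using $G^{2}M=-M$, $HGM=JM$ on $\mathcal{H}$ and the antisymmetry of $d\sigma$ turns this into $d\sigma(M,N)=2g(JM,N)+g((\nabla_{U}J)GM,N)$, which is precisely (2.22). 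The two identities are equivalent, so expanding the error term with (2.22) and then collapsing the residual summands with (2.6) returns you verbatim to the expression you started from: the computation establishes $E=E$, not $E=0$.

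Concretely, after the bilinear cancellation the residual block is
\begin{equation*}
E=-2g(JZ,W)\,\phi(X,Y)+2g(HX,Y)\,\phi(GZ,W)+2g(JX,Y)\,\phi(Z,W)-2g(HZ,W)\,\phi(GX,Y),
\end{equation*}
where $\phi(M,N)=g((\nabla_{U}J)GM,N)=d\sigma(M,N)-2g(JM,N)$. This vanishes identically in $X,Y,Z,W\in\mathcal{H}$ only if $\phi$ is pointwise a combination of $g(J\cdot,\cdot)$ and $g(H\cdot,\cdot)$ (fix $X,Y$ with $g(JX,Y)=g(HX,Y)=0$ and vary $Z,W$ to see this), and neither your proposal nor the paper supplies such a structural fact about $\nabla_{U}J$ on $\mathcal{H}$. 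It does hold in the standard examples ($\nabla J=0$ on the twistor-space models, $\sigma=0$ on the complex Heisenberg group), but as a general argument the decisive step is missing. To repair the proof you would need an independent identity controlling $(\nabla_{U}J)\vert_{\mathcal{H}}$ --- for instance deriving its $G$- and $J$-(anti)commutation properties from (2.1), (3.9)--(3.11) and showing the associated 2-form lies in the span of $g(J\cdot,\cdot)$, $g(G\cdot,\cdot)$, $g(H\cdot,\cdot)$ --- rather than cycling between (2.6) and (2.22).
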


\begin{theorem}
Let M be a normal complex contact metric manifold and $X,Y$ be arbitrary
vector fields on M. Then the curvature of a normal complex contact metric
manifold satisfies following equations. 
\begin{equation}
R(X,U)U=X_{0}-2d\sigma (U,V)v(X)V
\end{equation}%
\begin{equation}
R(X,V)V=X_{0}-2d\sigma (U,V)u(X)U
\end{equation}%
\begin{equation}
R(X,U)V=\sigma (U)GX_{0}+\left( \nabla _{U}H\right) X_{0}-JX_{0}+2d\sigma
(U,V)v(X)U
\end{equation}%
\begin{equation}
R(X,V)U=-\sigma (V)HX_{0}+\left( \nabla _{V}G\right) X_{0}+JX_{0}+2d\sigma
(U,V)u(X)V
\end{equation}%
\begin{equation}
R(U,V)X=JX_{0}+(u(x)V-v(X)U)2d\sigma (U,V)
\end{equation}%
\begin{eqnarray}
R(X,Y)U &=&-u(X)Y_{0}+v(X)\left( \sigma (V)HY_{0}+\left( \nabla _{V}G\right)
Y_{0}+JY_{0}\right) \\
&&+u(Y)X_{0}+v(Y)\left( -\sigma \left( V\right) HX_{0}+\left( \nabla
_{V}G\right) X_{0}+JX_{0}\right)  \notag \\
&&+\left[ 2\left( g(X_{0},JY_{0}\right) +d\sigma \left( X_{0},Y_{0}\right)
)\right.  \notag \\
&&\left. +2d\sigma \left( U,V\right) u\wedge v(X,Y)\right] V  \notag
\end{eqnarray}%
\begin{eqnarray}
R(X,Y)V &=&-u(X)\left( \sigma (U)GY_{0}+\left( \nabla _{U}H\right)
Y_{0}-JY_{0}\right) -v(X)Y_{0} \\
&&u(Y)\left( -\sigma \left( U\right) GX_{0}+\left( \nabla _{U}H\right)
X_{0}-JX_{0}\right) +v(Y)X_{0}+  \notag \\
&&\left[ -2\left( g(X_{0},JY_{0}\right) +d\sigma \left( X_{0},Y_{0}\right)
)\right.  \notag \\
&&\left. -2d\sigma \left( U,V\right) u\wedge v(X,Y)\right] U  \notag
\end{eqnarray}%
\begin{eqnarray}
R(X,U)Y &=&u\left( Y\right) X_{0}-v(X)JY_{0}+v(Y)\left( \sigma
(U)GX_{0}+\left( \nabla _{U}H\right) X_{0}\right. \\
&&\left. -JX_{0}\right) +\left[ -g(X_{0},Y_{0})-2d\sigma \left( U,V\right)
v(X)v(Y)\right] U  \notag \\
&&+\left[ d\sigma \left( Y_{0},X_{0}\right) -g(JX_{0},Y_{0})\right.  \notag
\\
&&-\left. 2d\sigma \left( U,V\right) v(X)u(Y))\right] V  \notag
\end{eqnarray}%
\begin{eqnarray}
R(X,V)Y &=&u(X)JY_{0}+v(Y)X_{0}+u(Y)\left( -\sigma (U)HX_{0}+\left( \nabla
_{V}G\right) X_{0}\right. \\
&&\left. +JX_{0}\right) +\left[ -g(X_{0},Y_{0})+u(X)u(Y)2d\sigma \left(
U,V\right) \right] V  \notag \\
&&\left[ -d\sigma \left( Y_{0},X_{0}\right) +g(JX_{0},Y_{0})\right.  \notag
\\
&&-\left. 2d\sigma \left( U,V\right) u(X)v(Y)\right] U  \notag
\end{eqnarray}%
where $X=X_{0}+u(X)U+v(X)V$ and $Y=Y_{0}+u(Y)U+v(Y)V.$

\begin{proof}
For $X=X_{0}+u(X)U+v(X)V$ \ we have%
\begin{equation*}
R(X,U)U=R(X_{0},U)U+u(X)R(U,U)U+v(X)R(V,U)U
\end{equation*}%
and%
\begin{equation*}
R(X,V)V=R(X_{0},U)V+u(X)R(U,V)V+v(X)R(V,V)V
\end{equation*}

From (2.12) and (2.13) \ we get (4.2) and (4.3).%
\begin{equation*}
R(X,U)V=R\left( X_{0},U\right) V+u(X)R(U,U)V+v(X)R(V,U)V
\end{equation*}%
and%
\begin{equation*}
R(X,V)U=R(X_{0},V)U+u(X)R(U,V)U+v(X)R(U,V)V
\end{equation*}%
and%
\begin{equation*}
R(U,V)X=R(U,V)X_{0}+u(X)R(U,V)U+v(X)R(U,V)V
\end{equation*}%
from (2.12), (2.15) , (2.16) and (2.19) we get (4.4) ,(4.5) and (4.6).

For $X=X_{0}+u(X)U+v(X)V$ and $Y=Y_{0}+u(Y)U+v(Y)V$ 
\begin{eqnarray*}
R(X,Y)U &=&R(X_{0},Y)U+u(X)R(U,Y)U+v(X)R\left( V,Y\right) U \\
&=&R(X_{0},Y_{0})U+u(Y)R(X_{0},U)U+v(Y)R(X_{0},V)U \\
&&+u(X)R(U,Y)U+v(X)R\left( V,Y\right) U
\end{eqnarray*}%
and%
\begin{eqnarray*}
R(X,Y)V &=&R(X_{0},Y)V+u(X)R(U,Y)V+v(X)R\left( V,Y\right) V \\
&=&R(X_{0},Y_{0})V+u(Y)R(X_{0},U)V+v(Y)R(X_{0},V)V \\
&&+u(X)R(U,Y)V+v(X)R\left( V,Y\right) V
\end{eqnarray*}%
from (2.13) , (2.14) ,(4.2)(4.3) ,(4.4) and (4.5) we get (4.7)\ and (4.8).
In addition 
\begin{eqnarray*}
R\left( X,U\right) Y &=&R\left( X_{0},U\right) Y+u(X)R\left( U,U\right)
Y+v(X)R\left( V,U\right) Y \\
&=&R\left( X_{0},U\right) Y_{0}+u(Y)R(X_{0},U)U+v(Y)R\left( X_{0},U\right) V
\\
&&+v(X)R\left( V,U\right) Y
\end{eqnarray*}%
and 
\begin{eqnarray*}
R\left( X,V\right) Y &=&R\left( X_{0},V\right) Y+u(X)R\left( U,V\right)
Y+v(X)R\left( V,V\right) Y \\
&=&R\left( X_{0},V\right) Y_{0}+u(Y)R(X_{0},V)U+v(Y)R\left( X_{0},V\right) V
\\
&&+u(X)R\left( U,V\right) Y
\end{eqnarray*}%
from (2.17)\ , (2.18) and (4.6) we have (4.9) and (4.10).
\end{proof}
\end{theorem}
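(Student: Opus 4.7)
The plan is to derive each identity by the horizontal/vertical decomposition $X=X_{0}+u(X)U+v(X)V$, $Y=Y_{0}+u(Y)U+v(Y)V$, and the $\mathbb{R}$-trilinearity of $R$. After the expansion, every summand that appears belongs to one of three classes: (a) terms in which at most one entry is vertical, covered directly by the preliminary formulas (2.12)--(2.19); (b) purely vertical terms of the form $R(V,U)U$, $R(U,V)V$, $R(V,U)V$, $R(U,V)U$, which Section~2 does not list; and (c) terms that vanish because two identical vertical vectors sit in the first slot of $R$.

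The key preliminary step is therefore to evaluate the four vertical curvatures in class~(b). Take $R(V,U)U$. By antisymmetry in the last pair, $g(R(V,U)U,U)=0$. For any horizontal $W$, the pair-interchange symmetry together with (2.12) gives
\[
R(V,U,U,W)=R(U,W,V,U)=-g(R(W,U)U,V)=-g(W,V)=0,
\]
while (2.11) yields $g(R(V,U)U,V)=-2d\sigma(U,V)$. Hence $R(V,U)U=-2d\sigma(U,V)\,V$. The same computation, using (2.11) and (2.13) in place of (2.12), produces
\[
R(U,V)V=-2d\sigma(U,V)\,U,\quad R(V,U)V=2d\sigma(U,V)\,U,\quad R(U,V)U=2d\sigma(U,V)\,V.
\]

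With these four values in hand the one-slot identities (4.2)--(4.6) are immediate: expand only the first argument of $R$, invoke (2.12), (2.13), (2.15), (2.16) or (2.19) for the horizontal piece, and plug in the appropriate vertical curvature for the remaining term. For instance, (4.2) is $R(X,U)U=R(X_{0},U)U+v(X)R(V,U)U=X_{0}-2d\sigma(U,V)v(X)V$. The two-slot identities (4.7)--(4.10) are obtained by expanding both $X$ and $Y$; this produces nine summands per identity, each of which is zero, a known value from (2.14), (2.17), or (2.18) with horizontal leading arguments, or one of the vertical curvatures computed above.

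The hard part is purely organizational: assembling the many pieces of (4.7)--(4.10) into the predicted form $[\cdots]U+[\cdots]V+(\text{horizontal})$ while carefully tracking signs produced by swapping $R(U,Y_{0})=-R(Y_{0},U)$ and by rewriting the $d\sigma(U,V)$ contributions via $u\wedge v(X,Y)=u(X)v(Y)-u(Y)v(X)$. No further geometric input is needed beyond (2.11)--(2.19) and the four vertical values derived above.
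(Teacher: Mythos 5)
Your proposal is correct and follows essentially the same route as the paper's proof (decompose $X$ and $Y$ into horizontal and vertical parts, expand $R$ by trilinearity, and feed each summand into (2.11)--(2.19)); in fact you make explicit the one step the paper glosses over, namely that the purely vertical terms $R(V,U)U$, $R(U,V)V$, $R(V,U)V$, $R(U,V)U$ must be determined as full vectors, which requires (2.11) for their $U$-, $V$-components together with a curvature-symmetry argument killing their horizontal components. The only blemish is a harmless sign slip in your displayed chain: the correct manipulation is $R(V,U,U,W)=R(W,U,U,V)=g\left( R(W,U)U,V\right) =g(W,V)=0$, which does not affect the (zero) conclusion.
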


\begin{corollary}
Let $X,Y,Z$ be horizontal vector fields on $M.$ Then the curvature of a
normal complex contact metric manifold satisfies equations (2.12), (2.13),
(2.14), (2.15), (2.16), (2.17), (2.18) and (2.19).
\end{corollary}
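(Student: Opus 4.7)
The plan is to derive this corollary as an immediate specialization of Theorem 2. The key observation is that if $X$ is horizontal (i.e., $X \in \mathcal{H}$), then $u(X) = g(U,X) = 0$, $v(X) = g(V,X) = 0$, and therefore in the decomposition $X = X_0 + u(X)U + v(X)V$ used throughout Theorem 2 we have $X_0 = X$. Hence every term in (4.2)–(4.10) involving $u$ or $v$ applied to a horizontal argument drops out, and every $X_0$, $Y_0$ collapses to the corresponding vector itself.

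First, I would dispose of (4.2) and (4.3): since $v(X) = 0$ (resp.\ $u(X) = 0$), the correction $-2d\sigma(U,V)v(X)V$ (resp.\ $-2d\sigma(U,V)u(X)U$) vanishes and only $X_0 = X$ remains, recovering (2.12). Next, (4.4) and (4.5) lose their $2d\sigma(U,V)v(X)U$ and $2d\sigma(U,V)u(X)V$ terms for the same reason, so they reduce to (2.15) and (2.16). Formula (4.6) loses the bracket $(u(X)V - v(X)U)$ and becomes (2.19).

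For the four-vector-field identities, I would handle (4.7) and (4.8) by noting that with $X,Y \in \mathcal{H}$ both pairs $u(X)$, $v(X)$, $u(Y)$, $v(Y)$ vanish, so all the mixed terms are killed; moreover $u \wedge v(X,Y) = 0$ for horizontal $X,Y$, eliminating the last $d\sigma(U,V)$ contribution. What remains is exactly $2(g(X,JY) + d\sigma(X,Y))V$ in the first case and its negative counterpart in the second, giving (2.13) and (2.14). Finally, (4.9) and (4.10) lose every term containing a factor of $u$ or $v$ applied to $X$ or $Y$, so only the base terms $-g(X_0,Y_0)U - g(JX_0,Y_0)V + d\sigma(Y_0,X_0)V$ (and the analogous expression) survive, which are exactly (2.17) and (2.18).

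There is no genuine obstacle here: the corollary is purely a bookkeeping consequence of Theorem 2, and the only subtlety worth mentioning explicitly in the written proof is the vanishing of $u \wedge v(X,Y)$ in (4.7)–(4.8), which is what removes the last would-be surviving correction term involving $d\sigma(U,V)$. Thus the proof reduces to a single sentence invoking Theorem 2 with horizontal inputs, followed by the remark that $u(X)=v(X)=u(Y)=v(Y)=0$ forces each of (4.2)–(4.10) to reduce to the corresponding formula among (2.12)–(2.19).
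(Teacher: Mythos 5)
Your argument is correct and is exactly the specialization the paper intends: the corollary is stated without proof immediately after the theorem giving (4.2)--(4.10), as an immediate consequence of setting $u(X)=v(X)=u(Y)=v(Y)=0$ and $X_0=X$, $Y_0=Y$ for horizontal inputs. Nothing further is needed.
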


\begin{theorem}
Let M be a normal complex contact metric manifold $.$ Then for arbitrary
vector fields $X$ and $Y$ on M we have 
\begin{equation}
d\sigma \left( X,Y\right) =2g\left( JX_{0},Y_{0}\right) +g\left( \left(
\nabla _{U}J\right) GX_{0},Y_{0}\right) +d\sigma (U,V)u\wedge v(X,Y).
\end{equation}
\end{theorem}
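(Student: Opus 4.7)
The plan is to expand $d\sigma(X,Y)$ linearly using the decompositions $X=X_{0}+u(X)U+v(X)V$ and $Y=Y_{0}+u(Y)U+v(Y)V$, then collapse the resulting nine terms using the mixed evaluations (2.10), (2.11) together with the horizontal formula (2.22).

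First I would write
\begin{align*}
d\sigma(X,Y) &= d\sigma(X_{0},Y_{0}) + u(Y)\,d\sigma(X_{0},U) + v(Y)\,d\sigma(X_{0},V) \\
&\quad + u(X)\,d\sigma(U,Y_{0}) + v(X)\,d\sigma(V,Y_{0}) \\
&\quad + u(X)u(Y)\,d\sigma(U,U) + v(X)v(Y)\,d\sigma(V,V) \\
&\quad + u(X)v(Y)\,d\sigma(U,V) + v(X)u(Y)\,d\sigma(V,U).
\end{align*}
The two diagonal terms $d\sigma(U,U)$ and $d\sigma(V,V)$ are zero by skew-symmetry. For the four mixed horizontal--vertical terms, I would apply (2.10)--(2.11): since $X_{0},Y_{0}\in\mathcal{H}$ one has $u(X_{0})=v(X_{0})=u(Y_{0})=v(Y_{0})=0$, hence $d\sigma(U,X_{0})=v(X_{0})d\sigma(U,V)=0$ and similarly $d\sigma(V,X_{0})=d\sigma(U,Y_{0})=d\sigma(V,Y_{0})=0$ (and by skew-symmetry the transposed evaluations also vanish). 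What remains is
\begin{equation*}
d\sigma(X,Y) = d\sigma(X_{0},Y_{0}) + \bigl(u(X)v(Y)-v(X)u(Y)\bigr)\,d\sigma(U,V),
\end{equation*}
and the coefficient of $d\sigma(U,V)$ is exactly $(u\wedge v)(X,Y)$.

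Finally I would substitute (2.22) applied to the horizontal pair $X_{0},Y_{0}$, namely $d\sigma(X_{0},Y_{0})=2g(JX_{0},Y_{0})+g((\nabla_{U}J)GX_{0},Y_{0})$, to reach the stated identity.

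There is no real obstacle; the argument is essentially bookkeeping. The only point to be a bit careful with is confirming that (2.22), which appears in the preliminaries without an explicit restriction, is being used in its genuinely horizontal form — but this is exactly the setting in which it is needed here, and the mixed terms have already been killed off by (2.10)--(2.11) before (2.22) is invoked, so the two ingredients fit together cleanly.
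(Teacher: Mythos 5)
Your argument is correct, and it reaches the theorem by a genuinely more elementary route than the paper. The paper does not expand $d\sigma$ bilinearly at all: it starts from $2d\sigma(X,Y)=X\sigma(Y)-Y\sigma(X)-\sigma([X,Y])$ with $\sigma(X)=g(\nabla_X U,V)$, which produces $2d\sigma(X,Y)=g(R(X,Y)U,V)+g(\nabla_Y U,\nabla_X V)-g(\nabla_X U,\nabla_Y V)$; it then evaluates the last two terms via (2.7) and $HG=J+u\otimes V-v\otimes U$, and substitutes its own general curvature formula (4.7) for $R(X,Y)U$ to land on the same intermediate identity you obtain, namely $d\sigma(X,Y)=d\sigma(X_0,Y_0)+d\sigma(U,V)\,u\wedge v(X,Y)$, before invoking (2.22). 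Your route gets that intermediate identity purely from the tensoriality of the $2$-form $d\sigma$ together with the vertical evaluations $d\sigma(U,\cdot)$, $d\sigma(V,\cdot)$ and skew-symmetry, so it avoids any appeal to the curvature tensor and in particular does not depend on Theorem~2 (equation (4.7)); the paper's route, by contrast, ties the statement to the curvature machinery it has just developed, which is thematically consistent with the section but logically heavier. Two small points: the mixed evaluations you need are both contained in the single display (2.10) (equation (2.11) is the curvature identity $R(U,V,V,U)=-2d\sigma(U,V)$ and is not used), and your observation that (2.22) must be read as a statement about horizontal arguments is exactly right — it is the only reading compatible with the theorem being proved.
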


\begin{proof}
For vector fields $X$ and $Y$ we have 
\begin{eqnarray*}
2d\sigma \left( X,Y\right) &=&X\sigma \left( Y\right) -Y\sigma \left(
X\right) -\sigma \left( \left[ X,Y\right] \right) \\
&=&Xg\left( \nabla _{Y}U,V\right) -Yg\left( \nabla _{X}U,V\right) -g\left(
\nabla _{\left[ X,Y\right] }U,V\right) \\
&=&g\left( \nabla _{X}\nabla _{Y}U,V\right) +g\left( \nabla _{Y}U,\nabla
_{X}V\right) -g\left( \nabla _{Y}\nabla _{X}U,V\right) \\
&&-g\left( \nabla _{X}U,\nabla _{Y}V\right) -g\left( \nabla _{\left[ X,Y%
\right] }U,V\right) \\
&=&g(R\left( X,Y\right) U,V)+g\left( \nabla _{Y}U,\nabla _{X}V\right)
-g\left( \nabla _{X}U,\nabla _{Y}V\right)
\end{eqnarray*}%
from (2.7) and since $HG=-GH=J+u\otimes V-v\otimes U$ we have 
\begin{equation*}
2d\sigma \left( X,Y\right) =g(R\left( X,Y\right) U,V)+2g(JX,Y)+2u\wedge
v(X,Y)
\end{equation*}%
In addition from (4.7) and for $X=X_{0}+u(X)U+v(X)V$ , $Y=Y_{0}+u(Y)U+v(Y)V$
we get 
\begin{equation*}
g(R\left( X,Y\right) U,V)=2\left( g(X_{0},JY_{0})+d\sigma
(X_{0},Y_{0})\right) +2d\sigma (U,V)u\wedge v(X,Y)
\end{equation*}
and since 
\begin{equation*}
g\left( JX,Y\right) =g(JX_{0},Y_{0})-u\wedge v(X,Y)
\end{equation*}%
we obtained 
\begin{equation*}
d\sigma \left( X,Y\right) =2g\left( JX_{0},Y_{0}\right) +g\left( \left(
\nabla _{U}J\right) GX_{0},Y_{0}\right) +d\sigma (U,V)u\wedge v(X,Y).
\end{equation*}
\end{proof}

\begin{theorem}
Let $M$ be a complex contact metric manifold. Then $M$ is normal if and only
if\bigskip 
\begin{eqnarray}
(\nabla _{X}G)Y &=&\sigma (X)HY-2v(X)JY-u\left( Y\right) X \\
&&-v(Y)JX+v(X)\left( 2JY_{0}-\left( \nabla _{U}J\right) GY_{0}\right)  \notag
\\
&&+g(X,Y)U+g(JX,Y)V  \notag \\
&&-d\sigma (U,V)v(X)\left( u(Y)V-v(Y)U\right)  \notag
\end{eqnarray}%
\begin{eqnarray}
(\nabla _{X}H)Y &=&-\sigma (X)GY+2u(X)JY+u(Y)JX \\
&&-v(Y)X+u(X)\left( -2JY_{0}-\left( \nabla _{U}J\right) GY_{0}\right)  \notag
\\
&&-g(JX,Y)U+g(X,Y)V  \notag \\
&&+d\sigma (U,V)u(X)\left( u(Y)V-v(Y)U\right) .  \notag
\end{eqnarray}
\end{theorem}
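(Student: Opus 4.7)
The plan is to obtain (4.12) and (4.13) by algebraically rewriting the equations (2.4) and (2.5) of Proposition 1. Since Proposition 1 already supplies the biconditional "$M$ is normal $\iff$ (2.4) and (2.5) hold", once (4.12)--(4.13) and (2.4)--(2.5) are shown to express the same relations the "if and only if" statement is immediate in both directions.

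I would start from (2.4) and attack the only two terms that do not appear verbatim in (4.12), namely $v(X)\,d\sigma(GZ,GY)$ and $-2v(X)\,g(HGY,Z)$. Since $GU=GV=0$, both $GZ$ and $GY$ are horizontal, so $u\wedge v(GZ,GY)=0$ and (4.11) (equivalently, Foreman's relation (2.22)) gives $d\sigma(GZ,GY) = 2g(JGZ,GY) + g((\nabla_U J)G^{2}Z,GY)$. Using $JG=-H$ turns the first piece into $-2g(HZ,GY)$, and $G^{2}Z = -Z + u(Z)U + v(Z)V$ together with $(\nabla_U J)U = (\nabla_U J)V = 0$ (a direct consequence of (2.8)) reduces the second piece to $-g((\nabla_U J)Z_{0},GY_{0})$. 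Antisymmetry of $(\nabla_U J)$ combined with (3.2) then converts that expression to $g((\nabla_U J)GY_{0},Z)$. In parallel, expanding $HGY = JY + u(Y)V - v(Y)U$ via the identity $HG = J + u\otimes V - v\otimes U$ rewrites $-2v(X)g(HGY,Z)$ as $-2v(X)g(JY,Z)$ plus explicit cross terms in $u(Y)v(Z)$ and $v(Y)u(Z)$. Collecting everything and using the projection identity $g(JY,Z) - g(JY_{0},Z) = -u(Y)v(Z) + v(Y)u(Z)$ (which follows from $JU=-V$, $JV=U$), one matches (2.4) with the pairing of $Z$ against the right-hand side of (4.12). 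Since $Z$ is arbitrary, the scalar equality upgrades to the vector identity (4.12).

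The derivation of (4.13) from (2.5) is entirely parallel: one uses $JH=G$, $H^{2}Z = -Z + u(Z)U + v(Z)V$, $GH=-HG$, and (2.21) in place of (2.20) to expand $d\sigma(HZ,HY)$, and applies the mirror identities $(\nabla_U J)|_{\mathcal{V}} = 0$ and the antisymmetry of $(\nabla_U J)$ as above. The structural symmetry between (2.4)--(2.5) and (4.12)--(4.13) makes this second computation essentially a sign-flipped copy of the first.

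All the operations above are purely tensorial rewritings, hence reversible: starting from (4.12) and (4.13) one recovers (2.4) and (2.5) by running the computation backwards, whereupon Proposition 1 yields normality. The main obstacle will be the meticulous bookkeeping of the vertical--horizontal cross terms $u(Y), v(Y), u(Z), v(Z)$ and of the $d\sigma(U,V)$ correction introduced by (4.11); one must verify that the various contributions combine precisely to the $-d\sigma(U,V)\,v(X)(u(Y)V-v(Y)U)$ residual that appears in (4.12) (and its $u(X)$-analogue in (4.13)), without leaving any spurious extra terms.
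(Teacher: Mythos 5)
Your forward direction is essentially the paper's argument: both start from Proposition~1 and convert the terms $v(X)\,d\sigma(GZ,GY)$ and $-2v(X)g(HGY,Z)$ (resp.\ their $H$-analogues) into the form appearing in (4.12)--(4.13). The only difference is routing: the paper first uses (2.9) to turn $d\sigma(GZ,GY)$ into $d\sigma(Y,Z)-2d\sigma(U,V)\,u\wedge v(Y,Z)$ and then substitutes (4.11), whereas you apply (2.22)/(4.11) directly to the horizontal arguments $GZ,GY$ and clean up with $JG=-H$, $G^2=-I+u\otimes U+v\otimes V$, and the vanishing of $(\nabla_U J)$ on $U,V$. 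That is a cosmetic difference, and your explicit warning about tracking the $u,v$ cross terms and the $d\sigma(U,V)$ residual is exactly where the bookkeeping lives.

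The genuine gap is in your converse. You assert that the rewriting is "purely tensorial, hence reversible," so that (4.12)--(4.13) give back (2.4)--(2.5) and Proposition~1 finishes. But every nontrivial identity you used in the rewriting --- (4.11) (Theorem~4), the relations $(\nabla_U J)U=(\nabla_U J)V=0$ via (2.8), the identities (3.2), and in the paper's route also (2.9) --- is itself proved \emph{under the hypothesis that $M$ is normal}; Theorem~4 in particular is derived from the curvature identity (4.7) and the formulas (2.7) for $\nabla U$, $\nabla V$, all consequences of normality. Running the computation backwards from (4.12)--(4.13) therefore presupposes the very conclusion you are trying to reach: the argument is circular as stated. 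The paper avoids this by proving the converse from the definition of normality itself: assuming (4.12)--(4.13), it computes $S(X,U)$ and $T(X,V)$ for arbitrary $X$, and $S(X,Y)$, $T(X,Y)$ for horizontal $X,Y$, directly from the expressions (2.2)--(2.3) for $S$ and $T$, and checks that all terms cancel, which is precisely Korkmaz's normality criterion. To repair your proposal you would either need to adopt that strategy, or independently establish that (4.12)--(4.13) alone imply the auxiliary identities ((2.7), (2.8), (4.11), (2.9)) on a general complex contact metric manifold before reversing the computation --- neither of which is addressed in the proposal.
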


\begin{proof}
Suppose that $M$ is a normal complex contact metric manifold. \ Then from
(2.4) and (2.5) we have%
\begin{eqnarray*}
g\left( (\nabla _{X}G)Y,Z\right) &=&g\left( \sigma (X)HY-2v(X)JY-u\left(
Y\right) X-v\left( Y\right) JX\right. \\
&&\left. +g\left( X,Y\right) U+g\left( X,JY\right) V,Z\right) +v\left(
X\right) d\sigma \left( GZ,GY\right) .
\end{eqnarray*}%
and%
\begin{eqnarray*}
g\left( (\nabla _{X}H)Y,Z\right) &=&g\left( -\sigma (X)GY+2u(X)JY+u\left(
Y\right) JX-v\left( Y\right) X\right. \\
&&\left. -g\left( JX,Y\right) U+g\left( X,Y\right) V,Z\right) -u(X)d\sigma
\left( HZ,HY\right) .
\end{eqnarray*}%
Since $u\wedge v(Y,Z)=g\left( u(Y\right) V-v(Y)U,Z)$ and from (2.9) we get%
\begin{eqnarray*}
g\left( (\nabla _{X}G)Y,Z\right) &=&g\left( \sigma (X)HY-2v(X)JY-u\left(
Y\right) X-v\left( Y\right) JX\right. \\
&&\left. +g\left( X,Y\right) U+g\left( X,JY\right) V,Z\right) \\
&&v\left( X\right) \left[ d\sigma (Y,Z)-2d\sigma \left( U,V\right) g\left(
u(Y)V-v\left( Y\right) U,Z\right) \right]
\end{eqnarray*}%
and%
\begin{eqnarray*}
g\left( (\nabla _{X}H)Y,Z\right) &=&g\left( -\sigma (X)GY+2u(X)JY-u\left(
Y\right) JX-v\left( Y\right) X\right. \\
&&\left. -g\left( JX,Y\right) U+g\left( X,Y\right) V,Z\right) \\
&&+u(X)\left[ d\sigma (Y,Z)-2d\sigma \left( U,V\right) g\left( u(Y)V-v\left(
Y\right) U,Z\right) \right] .
\end{eqnarray*}%
From (4.11) we can write%
\begin{equation*}
d\sigma \left( Y,Z\right) =g(2JY_{0}+\left( \bigtriangledown _{U}J\right)
GY_{0}+d\sigma \left( U,V\right) \left( u(Y)V-v\left( Y\right) U),Z\right) .
\end{equation*}%
By using this equation we obtain (4.12) and\ (4.13).

\qquad Conversely suppose that (4.12) and (4.13)\ hold. For arbitrary vector
field $X$\ and from (2.2), (2.3)\ we have 
\begin{equation*}
S(X,U)=(\nabla _{GX}G)U-G(\nabla _{X}G)U+G(\nabla _{U}G)X-\sigma \left(
U\right) GHX
\end{equation*}%
\begin{equation*}
T(X,V)=(\nabla _{HX}H)V-H(\nabla _{X}H)V+H(\nabla _{V}H)X-\sigma \left(
V\right) GHX
\end{equation*}%
From (4.12) and (4.13)\ we get $S(X,U)=T(X,V)=0.$

Now let $X$ and $Y$ be two horizontal vector fields. Since $u\left( X\right)
=u(Y)=v\left( X\right) =v(Y)=0$ we have \ 
\begin{eqnarray*}
S(X,Y) &=&(\nabla _{GX}G)Y-(\nabla _{GY}G)X-G(\nabla _{X}G)Y+G(\nabla _{Y}G)X
\\
&&+2g(X,GY)U-2g(X,HY)V+\sigma (GY)HX \\
&&-\sigma (GX)HY+\sigma (X)GHY-\sigma (Y)GHX
\end{eqnarray*}%
and 
\begin{eqnarray*}
T(X,Y) &=&(\nabla _{HX}H)Y-(\nabla _{HY}H)X-H(\nabla _{X}H)Y+H(\nabla _{Y}H)X
\\
&&-2g(X,GY)U+2g(X,HY)V+\sigma (HX)GY \\
&&-\sigma (HY)GX+\sigma (X)GHY-\sigma (Y)GHX.
\end{eqnarray*}%
By aplying (4.12) and (4.13) we get \ 
\begin{eqnarray*}
S(X,Y) &=&\sigma (GX)HY-\sigma (GY)HX-2g(X,GY)U \\
&&+2g(X,HY)V-\sigma (X)GHY+\sigma (Y)GHX \\
&&+2g(X,GY)U-2g(X,HY)V+\sigma (GY)HX \\
&&-\sigma (GX)HY+\sigma (X)GHY-\sigma (Y)GHX \\
&=&0
\end{eqnarray*}%
and%
\begin{eqnarray*}
T(X,Y) &=&-\sigma (HX)GY+\sigma (HY)GX+\sigma (X)HGY \\
&&-\sigma (Y)HGX+2g(X,GY)U-2g(X,HY)V \\
&&-2g(X,GY)U+2g(X,HY)V+\sigma (HX)GY \\
&&-\sigma (HY)GX+\sigma (X)GHY-\sigma (Y)GHX \\
&=&0
\end{eqnarray*}%
Therefore M is normal.
\end{proof}

Using (2.6), (4.11), (4.12)\ and (4.13)\ we obtained following corollary.

\begin{corollary}
Let $M$ be a normal complex contact metric manifold and $X,Y$ be two
arbitrary vector fields on $M$. Then we have 
\begin{eqnarray*}
(\nabla _{X}J)Y &=&-2u\left( X\right) HY+2v(X)GY+u(X)\left( 2HY_{0}+\left(
\nabla _{U}J\right) Y_{0}\right) \\
&&+v(X)\left( -2GY_{0}+\left( \nabla _{U}J\right) JY_{0}\right) .
\end{eqnarray*}
\end{corollary}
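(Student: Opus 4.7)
The plan is to start from $(2.6)$, which expresses $g((\nabla_X J)Y, Z)$ in terms of $d\sigma(Z, GY)$ and $d\sigma(Z, HY)$, and then to apply $(4.11)$ to convert those $d\sigma$ terms into expressions involving $\nabla_U J$. Since $GY = GY_0$ and $HY = HY_0$ (because $GU = GV = HU = HV = 0$) and $u\wedge v(Z, GY) = u\wedge v(Z, HY) = 0$, the substitution produces only horizontal inner products together with the correction terms already present in $(2.6)$.

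In the first step, substituting $(4.11)$ into $(2.6)$ and using $JG = -H$, $JH = G$ together with $g(JA, B) = -g(A, JB)$, one finds $2g(JZ_0, GY_0) = 2g(HY, Z)$ and $2g(JZ_0, HY_0) = -2g(GY, Z)$, so the explicit $-2g(HY, Z)$ and $+2g(GY, Z)$ corrections cancel. What remains is $g((\nabla_X J)Y, Z) = u(X)\, g((\nabla_U J)\,GZ_0, GY_0) + v(X)\, g((\nabla_U J)\,GZ_0, HY_0)$. To convert this into a vector identity, I use the skewness of $\nabla_U J$ (from $J^2 = -I$ and the $g$-compatibility of $J$) together with $g(GA, B) = -g(A, GB)$ to move both $G$ and $\nabla_U J$ across the inner products, and then invoke the anticommutation $G(\nabla_U J) + (\nabla_U J)\,G = 0$, obtained by differentiating $GJ = H$ and $JG = -H$ with $\nabla_U$ and applying $\nabla_U G = \sigma(U) H$ from $(2.1)$. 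Combined with $G^2 = -I$ and $HG = J$ on $\mathcal{H}$, and with $J(\nabla_U J) = -(\nabla_U J)J$ (from $J^2 = -I$), the two summands collapse to $g((\nabla_U J)Y_0, Z)$ and $g((\nabla_U J)JY_0, Z)$ respectively.

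Therefore $(\nabla_X J)Y = u(X)(\nabla_U J)Y_0 + v(X)(\nabla_U J)JY_0$, which coincides with the displayed formula once one notes that $HY = HY_0$ and $GY = GY_0$, so the additional terms $-2u(X)HY + 2u(X)HY_0$ and $2v(X)GY - 2v(X)GY_0$ vanish identically. Alternatively, the same conclusion follows by writing $J = HG - u\otimes V + v\otimes U$, differentiating, and substituting $(4.12)$ and $(4.13)$ for $(\nabla_X G)Y$ and $(\nabla_X H)(GY)$, together with Theorem $1$ for $(\nabla_X u)(Y)$, $(\nabla_X v)(Y)$ and $(2.7)$ for $\nabla_X U$, $\nabla_X V$; the many $\sigma(X)$-, $u(Y)$-, and $v(Y)$-terms produced then cancel in pairs, leaving the same expression. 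The main obstacle is the sign bookkeeping in the middle step, especially the clean derivation of the anticommutation $G(\nabla_U J) + (\nabla_U J)G = 0$ from $(2.1)$ and its careful application twice in the final matching.
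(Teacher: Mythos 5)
Your argument is correct and follows the same route the paper indicates (the paper gives no written proof, only the citation of (2.6), (4.11), (4.12) and (4.13)): substitute (4.11) into (2.6), note that the $\pm 2g(HY,Z)$ and $\pm 2g(GY,Z)$ terms cancel against $2g(JZ_{0},GY_{0})$ and $2g(JZ_{0},HY_{0})$, and then convert $g((\nabla _{U}J)GZ_{0},GY_{0})$ and $g((\nabla _{U}J)GZ_{0},HY_{0})$ into $g((\nabla _{U}J)Y_{0},Z)$ and $g((\nabla _{U}J)JY_{0},Z)$ via the skewness of $\nabla _{U}J$ and its anticommutation with $G$, after which the stated formula reduces (as you observe, since $HY=HY_{0}$ and $GY=GY_{0}$) to $(\nabla _{X}J)Y=u(X)(\nabla _{U}J)Y_{0}+v(X)(\nabla _{U}J)JY_{0}$. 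The one step you leave implicit is that $(\nabla _{U}J)Y_{0}$ and $(\nabla _{U}J)JY_{0}$ are horizontal, which is needed to upgrade $g(Z_{0},\cdot )$ to $g(Z,\cdot )$ at the end; this is exactly the paper's equation (3.2), so the gap is only expository.
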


\section{\protect\bigskip \textbf{Some Properties of Ricci Curvature for
Normal Complex Contact Metric Manifolds}}

\begin{theorem}
Let $M$ be a normal complex contact metric manifold and $X,Y$\ be horizontal
vector fields on $M$. Then we have 
\begin{equation}
\rho \left( GX,GY\right) =\rho \left( HX,HY\right) =\rho \left( X,Y\right)
\end{equation}%
\begin{equation}
\rho \left( GX,Y\right) =-\rho \left( X,GY\right) ,~\text{ }\rho \left(
HX,Y\right) =-\rho \left( X,HY\right) .
\end{equation}
\end{theorem}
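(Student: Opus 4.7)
The plan is to pick a local $g$-orthonormal frame adapted to the splitting $TM=\mathcal{H}\oplus \langle U,V\rangle $, namely $\{e_{1},\ldots ,e_{4n},U,V\}$ with $\{e_{i}\}$ spanning $\mathcal{H}$, and to expand
$$\rho (X,Y)=\sum_{i=1}^{4n}g(R(e_{i},X)Y,e_{i})+g(R(U,X)Y,U)+g(R(V,X)Y,V).$$
I will then compare $\rho (GX,GY)$ with $\rho (X,Y)$ term by term for horizontal $X,Y$. The proof of the first identity splits into a ``horizontal'' trace part and two ``vertical'' scalars; the $H$-version is entirely analogous, and the skew-symmetry identities will then follow by the substitutions $Y\mapsto GY$ and $Y\mapsto HY$ together with $G^{2}=H^{2}=-I$ on $\mathcal{H}$.

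The horizontal sum is the easy part. Since $G$ restricted to $\mathcal{H}$ is a $g$-isometry with $G^{2}=-I$, the family $\{-Ge_{i}\}_{i=1}^{4n}$ is again an orthonormal basis of $\mathcal{H}$ and $e_{i}=G(-Ge_{i})$. Applying Theorem~4.1 with arguments $(-Ge_{i},X,Y,-Ge_{i})$ gives
$$g(R(e_{i},GX)GY,e_{i})=g(R(-Ge_{i},X)Y,-Ge_{i}),$$
and summing over $i$ yields $\sum_{i}g(R(e_{i},GX)GY,e_{i})=\sum_{i}g(R(e_{i},X)Y,e_{i})$ by invariance of the trace under a change of orthonormal basis of $\mathcal{H}$.

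For the vertical terms I would expand $R(GX,U)GY$ using (2.17). The ingredients are $g(GX,GY)=g(X,Y)$; the identity $JG=-H$ on $\mathcal{H}$, which combined with $HG=J$ on $\mathcal{H}$ and the skew-symmetry of $H,J$ yields $g(JGX,GY)=-g(JX,Y)$; and $d\sigma (GY,GX)=d\sigma (X,Y)$ from (2.9) (the $u\wedge v$ term drops because $X,Y\in \mathcal{H}$). Substituting shows that the $U$-component of $R(GX,U)GY$ is $-g(X,Y)$, which is exactly the $U$-component of $R(X,U)Y$, so that $g(R(U,GX)GY,U)=g(R(U,X)Y,U)$. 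An entirely parallel calculation based on (2.18) handles the $V$-contribution. Adding the three pieces gives $\rho (GX,GY)=\rho (X,Y)$, and the identity $\rho (HX,HY)=\rho (X,Y)$ follows by the same argument using the $H$-half of Theorem~4.1 together with (2.17) and (2.18).

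For the second pair of identities, I would substitute $Y\mapsto GY$ in $\rho (GX,GY)=\rho (X,Y)$ and use $G^{2}Y=-Y$ for horizontal $Y$, obtaining $\rho (GX,Y)=-\rho (X,GY)$; the $H$-version is identical. The main obstacle I expect is the sign bookkeeping in the vertical contributions, where (2.9), (2.17), and (2.18) each contribute signs that must cancel precisely before the $U$- and $V$-components agree; once that is pinned down, the remainder of the argument reduces to the basis-independence of the trace combined with Theorem~4.1.
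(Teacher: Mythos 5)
Your proof is correct and follows essentially the same route as the paper: expand the Ricci tensor in an orthonormal frame adapted to $\mathcal{H}\oplus\langle U,V\rangle$, handle the horizontal part of the trace via equation (4.1) (you use basis-independence of the trace with the frame $\{-Ge_{i}\}$ where the paper uses the adapted quadruples $\{X_{i},GX_{i},HX_{i},JX_{i}\}$, to the same effect), and handle the vertical terms via the curvature identities with $U$ and $V$. Your derivation of (5.2) by substituting $Y\mapsto GY$ into (5.1) and using $G^{2}=-I$ on $\mathcal{H}$ is a slightly cleaner shortcut than the paper's second frame expansion, but the argument is the same in substance.
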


\begin{proof}
Let us choose a local orthonormal basis of the form 
\begin{equation*}
\left\{ X_{i},GX_{i},HX_{i},JX_{i},U,V:1\leq i\leq n\right\} .
\end{equation*}%
Then the Ricci tensor has the form%
\begin{eqnarray}
\rho (X,Y) &=&\underset{i=1}{\overset{n}{\sum }}%
[g(R(X_{i,}X)Y,X_{i})+g(R(GX_{i},X)Y,GX_{i}) \\
&&+g(R(HX_{i},X)Y,HX_{i})+g(R(JX_{i},X)Y,JX_{i})]  \notag \\
&&+g(R(U,X)Y,U)+g(R(V,X)Y,V).  \notag
\end{eqnarray}%
Therefore from (5.3)\ we can write 
\begin{eqnarray}
\rho (GX,GY) &=&\underset{i=1}{\overset{n}{\sum }}[g(R(X_{i,}GX)GY,X_{i}) \\
&&+g(R(GX_{i},GX)GY,GX_{i})+g(R(HX_{i},GX)GY,HX_{i})  \notag \\
&&+g(R(JX_{i},GX)GY,JX_{i})]+g(R(U,GX)GY,U)  \notag \\
&&+g(R(V,GX)GY,V)  \notag
\end{eqnarray}%
From (4.1) we have%
\begin{equation*}
g(R(X_{i,}GX)GY,X_{i})=g(R(GX_{i,}GGX)GGY,GX_{i})=(g(R(GX_{i,}X)Y,GX_{i})
\end{equation*}%
\begin{equation*}
g(R(GX_{i},GX)GY,GX_{i})=g(R(X_{i},X)Y,X_{i})
\end{equation*}%
\begin{equation*}
g(R(HX_{i},GX)GY,HX_{i})=g(R(GJX_{i},GX)GY,GJX_{i})=g(R(JX_{i},X)Y,JX_{i})
\end{equation*}%
\begin{equation*}
g(R(JX_{i},GX)GY,JX_{i})=g(R(-GHX_{i},GX)GY,-GHX_{i})=g(R(HX_{i},X)Y,HX_{i}).
\end{equation*}%
From (2.12) \ we have 
\begin{eqnarray*}
g(R(U,GX)GY,U) &=&g(R(GY,U)U,GX) \\
&=&g(GY,GX) \\
&=&g(Y,X) \\
&=&g(R(Y,U)U,X) \\
&=&g(R(U,X)Y,U)
\end{eqnarray*}%
and 
\begin{equation*}
g(R(V,GX)GY,V)=g(X,Y)=g(R(V,X)Y,V).
\end{equation*}%
Using these equations in (5.3) we showed that $\rho (GX,GY)=\rho (X,Y).$
Similarly from (5.1) we have 
\begin{eqnarray}
\begin{array}{cc}
& 
\end{array}%
\rho (HX,HY) &=&\underset{i=1}{\overset{n}{\sum }}[g(R(X_{i,}HX)HY,X_{i}) \\
&&+g(R(GX_{i},HX)HY,GX_{i})+g(R(HX_{i},HX)HY,HX_{i})  \notag \\
&&+g(R(JX_{i},HX)HY,JX_{i})]+g(R(U,HX)HY,U)  \notag \\
&&+g(R(V,HX)HY,V).  \notag
\end{eqnarray}%
and from (4.1) we get%
\begin{equation*}
g(R(X_{i},HX)HY,X_{i})=g(R(HX_{i,}HHX)HHY,HX_{i})=(g(R(HX_{i,}X)Y,HX_{i})
\end{equation*}%
\begin{equation*}
g(R(HX_{i},HX)HY,HX_{i})=g(R(X_{i},X)Y,X_{i})
\end{equation*}%
\begin{equation*}
g(R(GX_{i},HX)HY,GX_{i})=g(R(-HJX_{i},HX)HY,-HJX_{i})=g(R(JX_{i},X)Y,JX_{i})
\end{equation*}%
\begin{equation*}
g(R(JX_{i},HX)HY,JX_{i})=g(R(HGX_{i},HX)HY,HGX_{i})=g(R(GX_{i},X)Y,GX_{i}).
\end{equation*}%
From (2.12) we have 
\begin{eqnarray*}
g(R(U,HX)HY,U) &=&g(R(U,X)Y,U) \\
g(R(V,HX)HY,V) &=&g(R(V,X)Y,V).
\end{eqnarray*}%
So using these equations in (5.3)\ we get $\rho (HX,HY)=\rho (X,Y).$ From
(5.1) we have%
\begin{eqnarray*}
\rho (GX,Y) &=&\underset{i=1}{\overset{n}{\sum }}%
[g(R(X_{i,}GX)Y,X_{i})+g(R(GX_{i},GX)Y,GX_{i}) \\
&&+g(R(HX_{i},GX)Y,HX_{i})+g(R(JX_{i},GX)Y,JX_{i})] \\
&&+g(R(U,GX)Y,U)+g(R(V,GX)Y,V)
\end{eqnarray*}%
and%
\begin{eqnarray*}
\rho (HX,Y) &=&\underset{i=1}{\overset{n}{\sum }}%
[g(R(X_{i,}HX)Y,X_{i})+g(R(GX_{i},HX)Y,GX_{i}) \\
&&+g(R(HX_{i},HX)Y,HX_{i})+g(R(JX_{i},HX)Y,JX_{i})] \\
&&+g(R(U,HX)Y,U)+g(R(V,HX)Y,V)
\end{eqnarray*}%
From (4.1) and (2.12)\ we obtain

\begin{equation*}
\rho \left( GX,Y\right) =-\rho \left( X,GY\right) \text{ \ \ \ and \ \ }\rho
\left( HX,Y\right) =-\rho \left( X,HY\right) .
\end{equation*}
\end{proof}

\begin{theorem}
Let $M$ be a normal complex contact metric manifold. For any $X$ horizontal
vector filed and $U,V=-JU$\ vertical vector fields on $M$\ the Ricci
curvature tensor satisfies$\bigskip $%
\begin{equation}
\rho (X,U)=\rho (X,V)=0
\end{equation}%
\begin{equation}
\rho (U,U)=\rho (V,V)=4n-2d\sigma (U,V),\text{ \ }\rho (U,V)=0.
\end{equation}
\end{theorem}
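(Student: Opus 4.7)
My plan is to trace the Ricci tensor using the same local orthonormal basis
$\{X_i, GX_i, HX_i, JX_i, U, V : 1 \leq i \leq n\}$ used in Theorem~5, so
$$\rho(Y,Z) = \sum_{i=1}^{n}\bigl[g(R(X_i,Y)Z,X_i)+g(R(GX_i,Y)Z,GX_i)+g(R(HX_i,Y)Z,HX_i)+g(R(JX_i,Y)Z,JX_i)\bigr] + g(R(U,Y)Z,U)+g(R(V,Y)Z,V),$$
and then plug in, for each choice of $(Y,Z)$, the curvature identities of Section~2 together with the auxiliary identities of Section~3.

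For $\rho(X,U)$ with $X$ horizontal, each of the four horizontal summands vanishes because, by (2.14), $R(e,X)U$ lies in the span of $V$ and is hence orthogonal to the horizontal basis vectors $X_i, GX_i, HX_i, JX_i$. The term $g(R(U,X)U,U)$ vanishes because (2.12) gives $R(X,U)U=X \in \mathcal{H}$, which is orthogonal to $U$. For the term $g(R(V,X)U,V)$, I would use (2.16) to write $R(X,V)U=-\sigma(V)HX+(\nabla_V G)X+JX$; each summand is orthogonal to $V$, since $g(HX,V)=-g(X,HV)=0$, $g(JX,V)=-g(X,U)=0$, and $g((\nabla_V G)X,V)=0$ by Corollary~1 (3.1). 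The case $\rho(X,V)=0$ is entirely symmetric, using (2.14) together with (2.15).

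For $\rho(U,U)$, each of the four horizontal contributions reduces to $g(e,e)=1$ by (2.13) ($R(X,U)U=X$ for horizontal $X$), producing $4n$. The only remaining nontrivial term is $g(R(V,U)U,V)$. Using the curvature symmetries combined with (2.11),
$$g(R(V,U)U,V)=-g(R(U,V)U,V)=g(R(U,V)V,U)=-2d\sigma(U,V),$$
which gives $\rho(U,U)=4n-2d\sigma(U,V)$. The computation of $\rho(V,V)$ is the mirror image, employing $R(X,V)V=X$ from (2.13).

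For $\rho(U,V)$, the horizontal contributions are handled by (2.15): for any horizontal $e$,
$$g(R(e,U)V,e)=\sigma(U)g(Ge,e)+g((\nabla_U H)e,e)-g(Je,e).$$
The first and third summands vanish by skew-symmetry of $G$ and $J$, and the middle one vanishes by the expression for $g((\nabla_U H)X,Y)$ established in Theorem~2 (3.8), which evaluates to $0$ on the diagonal for horizontal vectors. The term $g(R(V,U)V,V)$ is zero by the skew-symmetry of $R$ in its last two slots, finishing $\rho(U,V)=0$. The only slightly delicate point is keeping track of signs in the curvature symmetries used in the $\rho(U,U)$ computation, and verifying that every piece of $R(X,V)U$ in (2.16) is truly $V$-orthogonal, which is where Corollary~1 is essential.
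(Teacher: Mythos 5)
Your proof is correct and follows essentially the same route as the paper: expand the Ricci tensor in the adapted orthonormal basis $\{X_i,GX_i,HX_i,JX_i,U,V\}$ and substitute the curvature identities (2.11)--(2.16) together with Corollary 1, and you even carry out the $\rho(U,V)=0$ case that the paper dismisses as ``direct computation.'' The only blemishes are two harmless citation slips: the fact that $R(\cdot,\cdot)U$ lies in $\operatorname{span}\{V\}$ is equation (2.13), not (2.14), and the diagonal vanishing of $g((\nabla_U H)e,e)$ for horizontal $e$ comes from (3.9), not (3.8).
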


\begin{proof}
For any $X$ horizontal vector field and $U,V=-JU$\ vertical vector fields on 
$M$ from (5.3) we can write%
\begin{eqnarray}
\rho (X,U) &=&\underset{i=1}{\overset{n}{\sum }}[g(R(X_{i,}X)U,X_{i}) \\
&&+g(R(GX_{i},X)U,GX_{i})+g(R(HX_{i},X)U,HX_{i})  \notag \\
&&+g(R(JX_{i},X)U,JX_{i})]+g(R(U,X)U,U)  \notag \\
&&+g(R(V,X)U,V)  \notag
\end{eqnarray}%
\begin{eqnarray}
\rho (X,V) &=&\underset{i=1}{\overset{n}{\sum }}[g(R(X_{i,}X)V,X_{i}) \\
&&+g(R(GX_{i},X)V,GX_{i})+g(R(HX_{i},X)V,HX_{i})  \notag \\
&&+g(R(JX_{i},X)V,JX_{i})]+g(R(U,X)V,U)  \notag \\
&&+g(R(V,X)V,V)  \notag
\end{eqnarray}%
Since $X_{i,}$ $X$ are horizontal vector fields on $M$ from (2.13) and (2.14)%
\begin{eqnarray*}
g(R(X_{i,}X)U,X_{i}) &=&g(2(g(X_{i,},JX)+d\sigma (X_{i,},X))V,X_{i,}) \\
&=&2(g(X_{i,},JX)+d\sigma (X_{i,},X))g(V,X_{i})
\end{eqnarray*}%
and%
\begin{eqnarray*}
g(R(X_{i,}X)V,X_{i}) &=&g(-2(g(X_{i,},JX)+d\sigma (X_{i,},X))U,X_{i,}) \\
&=&-2(g(X_{i,},JX)+d\sigma (X_{i,},X)g(U,X_{i})
\end{eqnarray*}%
Since $g(U,X_{i})=g(V,X_{i})=0$ we get 
\begin{equation*}
g(R(X_{i,}X)U,X_{i})=0\text{ \ and }g(R(X_{i,}X)V,X_{i})=0
\end{equation*}%
By the same way and since $%
g(U,X_{i})=g(U,GX_{i})=g(U,HX_{i})=g(U,JX_{i})=g(V,X_{i})=g(V,GX_{i})=g(V,HX_{i})=g(V,JX_{i})=0 
$ we have 
\begin{equation*}
g(R(GX_{i},X)U,GX_{i})=g(R(HX_{i},X)U,HX_{i})=g(R(JX_{i},X)U,JX_{i})=0
\end{equation*}%
and%
\begin{equation*}
g(R(GX_{i},X)V,GX_{i})=g(R(HX_{i},X)V,HX_{i})=g(R(JX_{i},X)V,JX_{i})=0.
\end{equation*}%
On the other \ hand using (2.12),(2.16) and (2.17) we have 
\begin{equation*}
g(R(U,X)U,U)=-g(R(X,U)U,U)=-g(X,U)=0,
\end{equation*}%
\begin{equation*}
g(R(V,X)V,V)=-g(R(X,V)V,V)=-g(X,V)=0,
\end{equation*}%
\begin{eqnarray*}
g(R(V,X)U,V). &=&-g(R(X,V)U,V) \\
&=&-g(-\sigma (V)HX+(\nabla _{V}G)X+JX,V) \\
&=&0,
\end{eqnarray*}%
\begin{eqnarray*}
g(R(U,X)V,U) &=&-g(R(X,U)V,U) \\
&=&-g(\sigma (U)GX+(\bigtriangledown _{U}H)X-JX,U) \\
&=&0.
\end{eqnarray*}%
Using these equations in (5.8) and (5.9) we get (5.6). On the other hand 
\begin{eqnarray*}
\rho (U,U) &=&\underset{i=1}{\overset{n}{\sum }}%
[g(R(X_{i,}U)U,X_{i})+g(R(GX_{i},U)U,GX_{i})+g(R(HX_{i},U)U,HX_{i}) \\
&&+g(R(JX_{i},U)U,JX_{i})]+g(R(U,U)U,U)+g(R(V,U)U,V)
\end{eqnarray*}%
and%
\begin{eqnarray*}
\rho (V,V) &=&\underset{i=1}{\overset{n}{\sum }}%
[g(R(X_{i,}V)V,X_{i})+g(R(GX_{i},V)V,GX_{i})+g(R(HX_{i},V)V,HX_{i}) \\
&&+g(R(JX_{i},V)V,JX_{i})]+g(R(U,V)V,U)+g(R(V,V)V,V)
\end{eqnarray*}%
from (2.11), (2.12) and by direct computation we get (5.7).
\end{proof}

\begin{corollary}
For arbitrary $X$ vector field on normal complex contact metric manifold $M$
we have 
\begin{eqnarray}
\rho (X,U) &=&\left( 4n-2d\sigma (U,V)\right) u(X) \\
\rho \left( X,V\right) &=&\left( 4n-2d\sigma (U,V)\right) v(X).  \notag
\end{eqnarray}

\begin{proof}
For a $X=X_{0}+u(X)U+v(X)V$ vector field we can write 
\begin{eqnarray*}
\rho (X,U) &=&\rho (X_{0}+u(X)U+v(X)V,U) \\
&=&\rho \left( X_{0},U\right) +u(X)\rho \left( U,U\right) +v(X)\rho \left(
V,U\right)
\end{eqnarray*}%
and 
\begin{eqnarray*}
\rho (X,V) &=&\rho (X_{0}+u(X)U+v(X)V,V) \\
&=&\rho \left( X_{0},V\right) +u(X)\rho \left( U,V\right) +v(X)\rho \left(
V,V\right)
\end{eqnarray*}%
from (5.6) and (5.7) we finished the proof.
\end{proof}
\end{corollary}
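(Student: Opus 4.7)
The plan is to reduce the statement to the previous theorem by linearity of the Ricci tensor. Since the preceding theorem already computes $\rho$ on all of the distinguished pairs built from $U$ and $V$ and horizontal vectors, the corollary should follow by a one-line decomposition argument.

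First I would invoke the standard splitting of an arbitrary vector field $X$ into its horizontal and vertical parts, writing $X = X_0 + u(X)U + v(X)V$, where $X_0 \in \mathcal{H}$. This decomposition is canonical because $u$ and $v$ are dual to $U$ and $V$ and the horizontal distribution $\mathcal{H}$ is the kernel of both $u$ and $v$.

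Next, using the $\mathbb{R}$-bilinearity of the Ricci tensor in its first argument, I would expand
\begin{equation*}
\rho(X,U) = \rho(X_0,U) + u(X)\,\rho(U,U) + v(X)\,\rho(V,U),
\end{equation*}
and analogously for $\rho(X,V)$. At this point I would apply the preceding theorem: $\rho(X_0,U)=\rho(X_0,V)=0$ because $X_0$ is horizontal, $\rho(U,V)=0$, and $\rho(U,U)=\rho(V,V)=4n-2d\sigma(U,V)$. Substituting these values collapses both sums to the single surviving term and yields the claimed formulas.

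There is no genuine obstacle here; the content of the corollary is just the \emph{tensorial extension} of the previous theorem, with the scalar factor $4n - 2d\sigma(U,V)$ arising directly from $\rho(U,U)=\rho(V,V)$. The only point requiring any care is making sure that the cross terms $\rho(U,V)$ and $\rho(V,U)$ vanish, which is guaranteed by the symmetry of $\rho$ combined with $\rho(U,V)=0$ from the preceding theorem.
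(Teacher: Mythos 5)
Your proposal is correct and follows exactly the same route as the paper: decompose $X=X_{0}+u(X)U+v(X)V$, expand $\rho(X,U)$ and $\rho(X,V)$ by bilinearity, and substitute the values $\rho(X_{0},U)=\rho(X_{0},V)=0$, $\rho(U,V)=0$, and $\rho(U,U)=\rho(V,V)=4n-2d\sigma(U,V)$ from the preceding theorem. Your extra remark that the cross terms vanish by symmetry of $\rho$ together with $\rho(U,V)=0$ is a correct and welcome clarification of a point the paper leaves implicit.
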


\begin{corollary}
Let $M$ be a normal complex contact metric manifold and $X,Y\ $be two
arbitrary vector fields on $M$. Then Ricci curvature tensor satisfies%
\begin{equation}
\rho (X,Y)=\rho (X_{0},Y_{0})+\left( 4n-2d\sigma (U,V)\right) \left(
u(X)u(Y)+v(X)v(Y)\right)
\end{equation}%
wehere $X=X_{0}+u(X)U+v(X)V,~Y=Y_{0}+u(Y)U+v(Y)V$ \ where $X_{0}$ and $Y_{0} 
$ are in $\mathcal{H}.$
\end{corollary}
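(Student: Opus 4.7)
The plan is to exploit the $\mathbb{R}$-bilinearity and symmetry of the Ricci tensor together with the two preceding results (equations (5.6) and (5.7) from Theorem 6) so that the decomposition $X=X_{0}+u(X)U+v(X)V$ and $Y=Y_{0}+u(Y)U+v(Y)V$ reduces $\rho(X,Y)$ to the claimed closed form. This is essentially a bookkeeping argument; the substantive content has already been established in Theorem 6 and its corollary, so no new geometric input is needed.

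First I would write
\begin{align*}
\rho(X,Y) &= \rho\bigl(X_{0}+u(X)U+v(X)V,\, Y_{0}+u(Y)U+v(Y)V\bigr)
\end{align*}
and expand by bilinearity into the nine terms
$\rho(X_{0},Y_{0})$, $u(Y)\rho(X_{0},U)$, $v(Y)\rho(X_{0},V)$,
$u(X)\rho(U,Y_{0})$, $v(X)\rho(V,Y_{0})$,
$u(X)u(Y)\rho(U,U)$, $u(X)v(Y)\rho(U,V)$, $v(X)u(Y)\rho(V,U)$, and $v(X)v(Y)\rho(V,V)$.

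Next I would apply (5.6) of Theorem 6 to kill the four cross terms: since $X_{0},Y_{0}\in\mathcal{H}$, we have $\rho(X_{0},U)=\rho(X_{0},V)=0$, and by symmetry of $\rho$ also $\rho(U,Y_{0})=\rho(V,Y_{0})=0$. Then I would apply (5.7) of Theorem 6 to evaluate the purely vertical terms: $\rho(U,V)=\rho(V,U)=0$, while $\rho(U,U)=\rho(V,V)=4n-2d\sigma(U,V)$. Only three terms survive, and collecting them gives
\begin{equation*}
\rho(X,Y) = \rho(X_{0},Y_{0}) + \bigl(4n-2d\sigma(U,V)\bigr)\bigl(u(X)u(Y)+v(X)v(Y)\bigr),
\end{equation*}
which is the desired identity.

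There is no real obstacle here; the only point that requires a brief justification is the use of the symmetry of $\rho$ to pass from $\rho(X_{0},U)=0$ to $\rho(U,Y_{0})=0$, but this is immediate from the symmetry of the Ricci tensor on a Riemannian manifold. Thus the proof is a direct bilinear expansion together with an application of the preceding theorem.
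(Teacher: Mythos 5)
Your proposal is correct and follows essentially the same route as the paper: a bilinear expansion of $\rho$ over the decompositions of $X$ and $Y$, with the cross terms killed by (5.6) and the vertical terms evaluated by (5.7). The only cosmetic difference is that the paper routes the expansion through the intermediate identity $\rho(X,U)=(4n-2d\sigma(U,V))u(X)$ of the preceding corollary rather than expanding all nine terms at once, which changes nothing of substance.
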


\begin{proof}
For $X=X_{0}+u(X)U+v(X)V,~Y=Y_{0}+u(Y)U+v(Y)V$ vector fields on $M$ we have 
\begin{eqnarray*}
\rho (X,Y) &=&\rho (X_{0}+u(X)U+v(X)V,Y) \\
&=&\rho (X_{0},Y)+u\left( X\right) \rho (U,Y)+v(X)\rho (V,Y)
\end{eqnarray*}%
and from (5.10) 
\begin{eqnarray*}
\rho (X,Y) &=&\rho (X_{0},Y_{0}+u(Y)U+v(Y)V)+u(X)u(Y)\left( 4n-2d\sigma
(U,V)\right) \\
&&+v(X)v(Y)\left( 4n-2d\sigma (U,V)\right) .
\end{eqnarray*}

By using (5.6) and (5.7) we obtain\ 
\begin{equation*}
\rho (X,Y)=\rho (X_{0},Y_{0})+\left( 4n-2d\sigma (U,V)\right) \left(
u(X)u(Y)+v(x)v(Y)\right) .
\end{equation*}
\end{proof}

\begin{corollary}
Let $M$ be a normal complex contact metric manifold and $X,Y$\ be two
arbitrary vector fields on $M$. Then Ricci curvature tensor satisfies 
\begin{eqnarray*}
\rho (X,Y) &=&\rho (GX,GY)+\left( 4n-2d\sigma (U,V)\right) \left(
u(X)u(Y)+v(X)v(Y)\right) , \\
\rho (X,Y) &=&\rho (HX,HY)+\left( 4n-2d\sigma (U,V)\right) \left(
u(X)u(Y)+v(X)v(Y)\right) .
\end{eqnarray*}
\end{corollary}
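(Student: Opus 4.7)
The plan is to combine the previous corollary (which expresses $\rho(X,Y)$ in terms of $\rho(X_0,Y_0)$ plus a vertical correction) with the horizontal invariance result $\rho(GX,GY)=\rho(HX,HY)=\rho(X,Y)$ from Theorem~5.1. The key observation that makes the proof essentially a one-line substitution is that $G$ and $H$ annihilate the vertical vector fields.

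First I would write $X = X_0 + u(X)U + v(X)V$ and $Y = Y_0 + u(Y)U + v(Y)V$ with $X_0, Y_0 \in \mathcal{H}$. Using the identities $GU = GV = 0$ from the preliminaries, I get $GX = GX_0$ and $GY = GY_0$, so $GX$ and $GY$ are horizontal. Therefore Theorem~5.1 applies directly to the pair $(GX_0, GY_0)$, giving
\[
\rho(GX, GY) = \rho(GX_0, GY_0) = \rho(X_0, Y_0).
\]
Substituting this into the identity of the previous corollary,
\[
\rho(X,Y) = \rho(X_0,Y_0) + \bigl(4n - 2d\sigma(U,V)\bigr)\bigl(u(X)u(Y) + v(X)v(Y)\bigr),
\]
yields the first claimed equation. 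The second follows by the same argument, replacing $G$ by $H$ throughout and using $HU = HV = 0$ together with the $H$-version of Theorem~5.1.

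There is no real obstacle here; the content of the statement is entirely packaged in Theorem~5.1 and the previous corollary, and the proof reduces to noting that $G$ (resp.\ $H$) kills the vertical components of $X$ and $Y$, so applying $G$ (resp.\ $H$) to an arbitrary vector field reduces the question to the horizontal case already handled. The only mild care needed is to keep track that $u(GX) = v(GX) = 0$ (from $uG = vG = 0$ in the preliminaries), which is what guarantees no extra vertical correction term appears on the $\rho(GX,GY)$ side.
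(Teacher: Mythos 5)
Your proposal is correct and follows essentially the same route as the paper: decompose $X$ and $Y$, use $GU=GV=HU=HV=0$ to get $\rho(GX,GY)=\rho(GX_0,GY_0)$, invoke Theorem~5.1 on the horizontal parts, and substitute into the identity $\rho(X,Y)=\rho(X_0,Y_0)+(4n-2d\sigma(U,V))(u(X)u(Y)+v(X)v(Y))$ of the preceding corollary. No gaps.
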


\begin{proof}
If \ $X=X_{0}+u(X)U+v(X)V,~Y=Y_{0}+u(Y)U+v(Y)V$ since $GX=GX_{0}$, $%
GY=GY_{0} $ and $HX=HX_{0},HY=HY_{0}$ then 
\begin{equation*}
\rho (GX,GY)=\rho (GX_{0},GY_{0})\text{ and }\rho (HX,HY)=\rho
(HX_{0},HY_{0})\text{ .}
\end{equation*}%
By using (5.11) we have 
\begin{eqnarray*}
\rho (X,Y) &=&\rho (GX,GY)+\left( 4n-2d\sigma (U,V)\right) \left(
u(X)u(Y)+v(X)v(Y)\right) , \\
\rho (X,Y) &=&\rho (HX,HY)+\left( 4n-2d\sigma (U,V)\right) \left(
u(X)u(Y)+v(X)v(Y)\right) .
\end{eqnarray*}
\end{proof}

\begin{corollary}
On a normal complex contact metric manifold $M,$ for $Q$ Ricci operator we
have 
\begin{equation*}
QG=GQ,\text{ }~QH=HQ.
\end{equation*}
\end{corollary}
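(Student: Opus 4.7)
The plan is to reduce the operator identities $QG=GQ$ and $QH=HQ$ to the Ricci identities already established, namely equation (5.2) together with the decomposition (5.11). Since the Ricci operator is defined by $g(QX,Y)=\rho(X,Y)$ and both $G$ and $H$ are skew-adjoint with respect to $g$ (this is stated in the preliminaries), the identity $QG=GQ$ is equivalent to
\begin{equation*}
\rho(GX,Y) = -\rho(X,GY) \quad \text{for all } X,Y,
\end{equation*}
and likewise for $H$. Thus the whole task is to upgrade (5.2), which is stated only for horizontal vector fields, to arbitrary vector fields.

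First I would decompose $X = X_0 + u(X)U + v(X)V$ and $Y = Y_0 + u(Y)U + v(Y)V$ as in Corollary~5. The key observation is that $GX$ is automatically horizontal, because $GU = GV = 0$ gives $GX = GX_0$, and $uG = vG = 0$ gives $u(GX) = v(GX) = 0$. Applying (5.11) to the pair $(GX,Y)$ and to the pair $(X,GY)$ therefore eliminates both correction terms, leaving
\begin{equation*}
\rho(GX,Y) = \rho(GX_0,Y_0), \qquad \rho(X,GY) = \rho(X_0,GY_0).
\end{equation*}
Since $X_0$ and $Y_0$ are horizontal, the already-proved identity (5.2) gives $\rho(GX_0,Y_0) = -\rho(X_0,GY_0)$, which yields the desired skew identity for all vector fields. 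Combining this with the skew-adjointness of $G$ produces $g(QGX,Y) = \rho(GX,Y) = -\rho(X,GY) = -g(QX,GY) = g(GQX,Y)$, so $QG = GQ$. The argument for $QH = HQ$ is identical, using $HU = HV = 0$, $uH = vH = 0$, and the $H$-version of (5.2).

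There is no real obstacle here; the only subtlety is verifying that the correction term in (5.11) vanishes on $(GX,Y)$ and on $(X,GY)$, which is immediate from $uG = vG = 0$ (and similarly $uH = vH = 0$ in the $H$ case). Once that is noticed, the corollary follows in two lines from (5.2) and (5.11).
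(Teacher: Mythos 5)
Your proof is correct and follows essentially the same route as the paper: reduce $QG=GQ$ to the skew-symmetry identity $\rho(GX,Y)=-\rho(X,GY)$ via the skew-adjointness of $G$ with respect to $g$, and likewise for $H$. You are in fact slightly more careful than the paper, whose proof applies (5.2) --- stated only for horizontal vector fields --- directly to arbitrary $X,Y$; your use of (5.11) together with $GU=GV=0$ and $uG=vG=0$ to extend (5.2) to arbitrary vector fields closes that small gap.
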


\begin{proof}
For $X,Y$ vector fields on M we have 
\begin{equation*}
\rho \left( GX,Y\right) =g(GX,QY)=-g(X,GQY)
\end{equation*}%
and%
\begin{equation*}
\rho \left( X,GY\right) =g(X,QGY)
\end{equation*}%
from (5.2) we get 
\begin{equation*}
g(X,GQY)=g(X,QGY)
\end{equation*}%
and $QG=GQ.~$By same way one can show that $QH=HQ.$
\end{proof}

\section{\protect\bigskip \textbf{Example of a Normal Complex Contact Metric
Manifolds}}

The complex Heisenberg group is the closed subgroup $H_{{%
\mathbb{C}
}}$ of $GL(3,%
\mathbb{C}
)$ given by 
\begin{equation*}
H_{%
\mathbb{C}
}=\left\{ \left( 
\begin{array}{ccc}
1 & b_{12} & b_{13} \\ 
0 & 1 & b_{23} \\ 
0 & 0 & 1%
\end{array}%
\right) :b_{12},b_{13},b_{23}\in 
\mathbb{C}
\right\} \simeq 
\mathbb{C}
^{3}
\end{equation*}

Blair defined the following complex contact metric structure on $H_{%
\mathbb{C}
}$ in \cite{BL98} Let $z_{1},z_{2},z_{3}$be the coordinates on $H_{%
\mathbb{C}
}\simeq 
\mathbb{C}
^{3}$ defined by $z_{1}(B)=b_{23},z_{2}(B)=b_{12},z_{3}(B)=b_{13}$ for $B$
in $H_{%
\mathbb{C}
}.$ Here $H_{%
\mathbb{C}
}\simeq 
\mathbb{C}
^{3}$ and $\theta =\frac{1}{2}\left( dz_{3}-z_{2}dz_{1}\right) $ is global,
so the structure tensors may be taken globally. With J denoting the standard
almost complex structure on $%
\mathbb{C}
^{3}$, we may give a complex almost contact structure to $H_{%
\mathbb{C}
}$ as follows. Since $\theta $ is holomorphic, set\ $\theta =u+iv$ , $%
v=u\circ J;$ also set $4\frac{\partial }{\partial z_{3}}=U+iV$. Then $%
u(X)=g(U,X)$ and $v(X)=g(V,X).$ Since we will work in real coordinates, $G$
and $H$ are given by 
\begin{equation*}
G=\left[ 
\begin{array}{cccccc}
0 & 0 & 1 & 0 & 0 & 0 \\ 
0 & 0 & 0 & 1 & 0 & 0 \\ 
1 & 0 & 0 & 0 & 0 & 0 \\ 
0 & 1 & 0 & 0 & 0 & 0 \\ 
0 & 0 & x_{2} & y_{2} & 0 & 0 \\ 
0 & 0 & y_{2} & x_{2} & 0 & 0%
\end{array}%
\right] 
\end{equation*}%
\begin{equation*}
H=\left[ 
\begin{array}{cccccc}
0 & 0 & 0 & 1 & 0 & 0 \\ 
0 & 0 & 1 & 0 & 0 & 0 \\ 
0 & 1 & 0 & 0 & 0 & 0 \\ 
1 & 0 & 0 & 0 & 0 & 0 \\ 
0 & 0 & y_{2} & x_{2} & 0 & 0 \\ 
0 & 0 & x_{2} & y_{2} & 0 & 0%
\end{array}%
\right] 
\end{equation*}%
Then relative to the coordinates $\left(
x_{1},y_{1},x_{2},y_{2},x_{3},y_{3}\right) $ the Hermitian metric (matrix)%
\begin{equation*}
g=\frac{1}{4}\left[ 
\begin{array}{cccccc}
1+x_{2}^{2}+y_{2}^{2} & 0 & 0 & 0 & x_{2} & y_{2} \\ 
0 & 1+x_{2}^{2}+y_{2}^{2} & 0 & 0 & y_{2} & x_{2} \\ 
0 & 0 & 1 & 0 & 0 & 0 \\ 
0 & 0 & 0 & 1 & 0 & 0 \\ 
x_{2} & y_{2} & 0 & 0 & 1 & 0 \\ 
y_{2} & x_{2} & 0 & 0 & 0 & 1%
\end{array}%
\right] 
\end{equation*}%
in addition $\left\{ e_{1};e_{1}^{\ast };e_{2};e_{2}^{\ast
};e_{3};e_{3}^{\ast }\right\} $ is an orthonormal basis where%
\begin{eqnarray}
e_{1} &=&2\left( \frac{\partial }{\partial x_{1}}+x_{2}\frac{\partial }{%
\partial x_{3}}+y_{2}\frac{\partial }{\partial y_{3}}\right) ~,~ \\
e_{1}^{\ast } &=&2\left( \frac{\partial }{\partial y_{1}}+y_{2}\frac{%
\partial }{\partial x_{3}}+x_{2}\frac{\partial }{\partial y_{3}}\right)  
\notag \\
e_{2} &=&2\frac{\partial }{\partial x_{2}}\text{ \ \ \ , }e_{2}^{\ast }=2%
\frac{\partial }{\partial y_{2}}\text{\ ,~}  \notag \\
e_{3} &=&U=2\frac{\partial }{\partial x_{3}}\text{ \ , \ }e_{3}^{\ast }=V=2%
\frac{\partial }{\partial y_{3}}.  \notag
\end{eqnarray}%
Furthermore we have \cite{VAN2004}%
\begin{eqnarray*}
Ge_{1} &=&-e_{2},~Ge_{1}^{\ast }=e_{2}^{\ast },~Ge_{2}=e_{1},~Ge_{2}^{\ast
}=-e_{1}^{\ast } \\
He_{1} &=&-e_{2}^{\ast },~He_{1}^{\ast }=-e_{2},~He_{2}=e_{1}^{\ast
},~He_{2}^{\ast }=-e_{1} \\
Je_{1} &=&-e_{1}^{\ast },~Je_{1}^{\ast }=e_{1},~Je_{2}=-e_{2}^{\ast
},~Je_{2}^{\ast }=-e_{2}.
\end{eqnarray*}%
Let $\bigtriangledown $ be the Levi-Civita connection with respect to metric 
$g.$ Then from (6.1) we have 
\begin{equation}
\left[ e_{1},e_{2}\right] =-2e_{3},~\left[ e_{1},e_{2}^{\ast }\right]
=-2e_{3}^{\ast },~\left[ e_{1}^{\ast },e_{2}\right] =-2e_{3}^{\ast },~\left[
e_{1}^{\ast },e_{2}^{\ast }\right] =2e_{3}
\end{equation}%
and the other Lie brackets are zero \cite{VAN2004}. In addition we have%
\begin{equation*}
2g(\bigtriangledown _{e_{i}}e_{j},e_{k})=g\left[ e_{i},e_{j}\right]
,e_{k}+g\left( \left[ e_{k},e_{i}\right] ,e_{j}\right) -g\left( \left[
e_{j},e_{k}\right] ,e_{i}\right) 
\end{equation*}%
and from that we obtain 
\begin{equation}
\bigtriangledown _{e_{j}}e_{j}=\bigtriangledown
_{e_{j}}e_{j}=\bigtriangledown _{e_{j}}e_{j^{\ast }}=\bigtriangledown
_{e_{j}^{\ast }}e_{j}^{\ast }=0,
\end{equation}%
where $j=1,2,3.$ From (6.2) and (6.3) we need only list following 
\begin{eqnarray*}
\bigtriangledown _{e_{2}}e_{3} &=&\bigtriangledown _{e_{2}^{\ast
}}e_{3}^{\ast }=-e_{1}~~~~~~\bigtriangledown _{e_{2}^{\ast
}}e_{3}=-\bigtriangledown _{e_{2}}e_{3}^{\ast }=e_{1}^{\ast } \\
\bigtriangledown _{e_{1}}e_{3} &=&\bigtriangledown _{e_{1}^{\ast
}}e_{3}^{\ast }=e_{2}~~~~~~~~\bigtriangledown _{e_{1}}e_{3}^{\ast
}=-\bigtriangledown _{e_{1}^{\ast }}e_{3}=e_{2}^{\ast } \\
-\bigtriangledown _{e_{1}}e_{2} &=&\bigtriangledown _{e_{1}^{\ast
}}e_{2}^{\ast }=e_{3}~~~~~~\ ~~\bigtriangledown _{e_{1}}e_{2}^{\ast
}=\bigtriangledown _{e_{1}^{\ast }}e_{2}=-e_{3}^{\ast }.
\end{eqnarray*}%
Now let 
\begin{equation*}
\Gamma =\left\{ \left. \left( 
\begin{array}{ccc}
1 & \gamma _{2} & \gamma _{3} \\ 
0 & 1 & \gamma _{1} \\ 
0 & 0 & 1%
\end{array}%
\right) \right\vert \gamma _{k}=m_{k}+in_{k},~m_{k},~n_{k}\in 
\mathbb{Z}
\right\} 
\end{equation*}%
$\Gamma $ is subgroup of $H_{%
\mathbb{C}
}\simeq 
\mathbb{C}
^{3},$ the 1-form $dz_{3}-z_{2}dz_{1}$ is invariant under the action on $%
\Gamma $ and with $\xi =U\wedge V$, hence the quotient $\left. H_{%
\mathbb{C}
}\right/ \Gamma $ is a compact complex contact manifold with a global
complex contact form. $\left. H_{%
\mathbb{C}
}\right/ \Gamma $ is known the \textit{Iwasawa manifold.}

It is known that with the help of the above results , it can be easily
verified that%
\begin{equation*}
\begin{tabular}{cccc}
\begin{tabular}{c}
$R(e_{1},e_{1}^{\ast })e_{1}=0~~~~$ \\ 
$R(e_{1},e_{1}^{\ast })e_{1}^{\ast }=0~~~~\ $ \\ 
$R(e_{1},e_{1}^{\ast })e_{2}=-2e_{2}^{\ast }$ \\ 
$R(e_{1},e_{1}^{\ast })e_{2}^{\ast }=2e_{2}~~~$%
\end{tabular}
& 
\begin{tabular}{l}
\begin{tabular}{c}
$R(e_{1},e_{2})e_{1}=3e_{2}$ \\ 
$R(e_{1},e_{2})e_{1}^{\ast }=-e_{2}^{\ast }$ \\ 
$R(e_{1},e_{2})e_{2}=-3e_{1}$ \\ 
$R(e_{1},e_{2})e_{2}^{\ast }=e_{1}^{\ast }$%
\end{tabular}%
\end{tabular}
& 
\begin{tabular}{c}
$R(e_{1},e_{2}^{\ast })e_{1}=3e_{2}$ \\ 
$R(e_{1},e_{2}^{\ast })e_{1}^{\ast }=0$ \\ 
$R(e_{1},e_{2}^{\ast })e_{2}=-e_{2}^{\ast }$ \\ 
$R(e_{1},e_{2}^{\ast })e_{2}^{\ast }=-3e_{1}$%
\end{tabular}
& 
\begin{tabular}{c}
$R(e_{1}^{\ast },e_{2})e_{1}=e_{2}^{\ast }$ \\ 
$R(e_{1}^{\ast },e_{2})e_{1}^{\ast }=2e_{2}$ \\ 
$R(e_{1}^{\ast },e_{2})e_{2}=-3e_{1}^{\ast }$ \\ 
$R(e_{1}^{\ast },e_{2})e_{2}^{\ast }=3e_{1}$%
\end{tabular}%
\end{tabular}%
\end{equation*}%
\begin{equation*}
\begin{tabular}{l}
\begin{tabular}{c}
$R(e_{1}^{\ast },e_{2}^{\ast })e_{1}=-e_{2}$ \\ 
$R(e_{1}^{\ast },e_{2}^{\ast })e_{1}^{\ast }=e_{2}^{\ast }$ \\ 
$R(e_{1}^{\ast },e_{2}^{\ast })e_{2}=e_{1}$ \\ 
$R(e_{1}^{\ast },e_{2}^{\ast })e_{2}^{\ast }=-3e_{1}^{\ast }$%
\end{tabular}%
\end{tabular}%
\begin{tabular}{c}
$R(e_{2},e_{2}^{\ast })e_{1}=-2e_{1}^{\ast }$ \\ 
$R(e_{2},e_{2}^{\ast })e_{1}^{\ast }=2e_{1}$ \\ 
$R(e_{2},e_{2}^{\ast })e_{2}=0$ \\ 
$R(e_{2},e_{2}^{\ast })e_{2}^{\ast }=0~.$%
\end{tabular}%
\end{equation*}

From (2.8) and since $\sigma =0$ \cite{KB2000} \ $R(e_{3},e_{3}^{\ast
})e_{3}^{\ast }=0$ and we have $R(X,U)U=X$ and $R(X,V)V=X$ for $X\in 
\mathcal{H}$. Similarly from $\sigma =0,$ (2.15) and (2.16) we get 
\begin{equation*}
\begin{tabular}{l}
\begin{tabular}{c}
$R(e_{1},e_{3})e_{3}^{\ast }=-e_{1}^{\ast }$ \\ 
$R(e_{1}^{\ast },e_{3}^{\ast })e_{3}=3e_{1}$ \\ 
$R(e_{2},e_{3})e_{3}^{\ast }=-e_{2}^{\ast }$ \\ 
$R(e_{2}^{\ast },e_{3}^{\ast })e_{3}=e_{2}$%
\end{tabular}%
\begin{tabular}{l}
\begin{tabular}{c}
$R(e_{1},e_{3}^{\ast })e_{3}=e_{1}^{\ast }$ \\ 
$R(e_{1}^{\ast },e_{3}^{\ast })e_{3}=-e_{1}$ \\ 
$R(e_{2},e_{3}^{\ast })e_{3}=e_{2}^{\ast }$ \\ 
$R(e_{2}^{\ast },e_{3}^{\ast })e_{3}=-3e_{2}$%
\end{tabular}%
\end{tabular}%
\end{tabular}%
\end{equation*}%
and from (2.13),(2.14) we get%
\begin{equation*}
\begin{tabular}{l}
\begin{tabular}{c}
$R(e_{1},e_{1}^{\ast })e_{3}=R(e_{1},e_{2})e_{3}=R(e_{1},e_{2}^{\ast
})e_{3}=0$ \\ 
$R(e_{1}^{\ast },e_{2})e_{3}=R(e_{1}^{\ast },e_{2}^{\ast
})e_{3}=R(e_{2},e_{2}^{\ast })e_{3}=0~~~$%
\end{tabular}%
\end{tabular}%
\end{equation*}%
and%
\begin{equation*}
\begin{tabular}{c}
$R(e_{1},e_{1}^{\ast })e_{3}^{\ast }=R(e_{1},e_{2})e_{3}^{\ast
}=R(e_{1},e_{2}^{\ast })e_{3}^{\ast }=0$ \\ 
$R(e_{1}^{\ast },e_{2})e_{3}^{\ast }=R(e_{1}^{\ast },e_{2}^{\ast
})e_{3}^{\ast }=R(e_{2},e_{2}^{\ast })e_{3}^{\ast }=0~~~.$%
\end{tabular}%
\end{equation*}%
Using these equations and from (5.2)\ Ricci curvature is%
\begin{eqnarray*}
\rho \left( e_{i},e_{i}\right) &=&\rho \left( e_{i}^{\ast },e_{i}^{\ast
}\right) =4\text{, }i=1,2,3 \\
\rho \left( e_{i},e_{j}\right) &=&\rho \left( e_{i}^{\ast },e_{j}^{\ast
}\right) =0,~j=1,2,3.
\end{eqnarray*}%
By direct computation the scalar curvature of Iwasava manifold is $\tau =24.$
Furthermore from curvature equalities the sectional curvature is 
\begin{eqnarray*}
k(e_{1},e_{3}) &=&k(e_{1}^{\ast },e_{3})=k(e_{2},e_{3})=k(e_{2}^{\ast
},e_{3})=1, \\
k(e_{1},e_{3}^{\ast }) &=&k(e_{1}^{\ast },e_{3}^{\ast })=k(e_{2},e_{3}^{\ast
})=k(e_{2}^{\ast },e_{3}^{\ast })=1
\end{eqnarray*}%
and since $\sigma =0$ 
\begin{equation*}
k(e_{3},e_{3}^{\ast })=0.
\end{equation*}%
In addition can be easily verified that%
\begin{eqnarray*}
k(e_{1},e_{1}^{\ast }) &=&k(e_{1},e_{2}^{\ast })=k(e_{1}^{\ast
},e_{2})=k(e_{2},e_{2}^{\ast })=0 \\
k(e_{1},e_{2}) &=&3\text{ and }k(e_{1}^{\ast },e_{2}^{\ast })=1.
\end{eqnarray*}%
With these result the holomorphic curvature of the Iwasawa manifold $%
K(X,JX)=0.$

\bigskip

\end{document}